\newtheorem{theorem}{Theorem}
\theoremstyle{plain}
\newtheorem{corollary}{Corollary}
\newtheorem{definition}{Definition}
\newtheorem{lemma}{Lemma}
\newtheorem{proposition}{Proposition}
\numberwithin{equation}{section}
\begin{document}
\title[Some new weighted compact embeddings results]{Some new weighted
compact embeddings results and existence of weak solutions for eigenvalue
Robin problem}
\author{Ismail AYDIN}
\address{Sinop University\\
Faculty of Arts and Sciences\\
Department of Mathematics\\
Sinop, Turkey}
\email{iaydin@sinop.edu.tr}
\author{Cihan UNAL}
\address{Assessment, Selection and Placement Center, Ankara, Turkey}
\email{cihanunal88@gmail.com}
\thanks{}
\subjclass[2000]{Primary 35J35, 46E35; Secondary 35J60, 35J70}
\keywords{Weak solution, $p\left( .\right) $-Laplacian, Mountain Pass Lemma,
Fountain Theorem, Ekeland variational principle}
\dedicatory{}
\thanks{}

\begin{abstract}
By applying Mountain Pass Lemma, Ekeland's and Ricceri's variational
principle, Fountain Theorem, we prove the existence and multiplicity of
solutions for the following Robin problem%
\begin{equation*}
\left\{ 
\begin{array}{cc}
-\func{div}\left( a(x)\left\vert \nabla u\right\vert ^{p(x)-2}\nabla
u\right) =\lambda b(x)\left\vert u\right\vert ^{q(x)-2}u, & x\in \Omega \\ 
a(x)\left\vert \nabla u\right\vert ^{p(x)-2}\frac{\partial u}{\partial
\upsilon }+\beta (x)\left\vert u\right\vert ^{p(x)-2}u=0, & x\in \partial
\Omega ,%
\end{array}%
\right.
\end{equation*}%
under some appropriate conditions.in the space $W_{a,b}^{1,p(.)}\left(
\Omega \right) .$
\end{abstract}

\maketitle

\section{Introduction}

Let $a(x)$ and $b(x)$ be weight functions. The purpose of the present paper
is to study the following Robin problem involving weighted $p(x)$-Laplacian
of the form%
\begin{equation}
\left\{ 
\begin{array}{cc}
-\func{div}\left( a(x)\left\vert \nabla u\right\vert ^{p(x)-2}\nabla
u\right) =\lambda b(x)\left\vert u\right\vert ^{q(x)-2}u, & x\in \Omega \\ 
a(x)\left\vert \nabla u\right\vert ^{p(x)-2}\frac{\partial u}{\partial
\upsilon }+\beta (x)\left\vert u\right\vert ^{p(x)-2}u=0, & x\in \partial
\Omega ,%
\end{array}%
\right.  \label{P}
\end{equation}%
where $\Omega \subset 
\mathbb{R}
^{N}$ $(N\geq 2)$ is a bounded smooth domain, $\frac{\partial u}{\partial
\upsilon }$ is the outer unit normal derivative of $u$ with respect to $%
\partial \Omega $, $\lambda >0$ is a real number, $p,q$ are continuous
functions on $\overline{\Omega }$, i.e. $p,q\in C\left( \overline{\Omega }%
\right) $ with $\underset{x\in \overline{\Omega }}{\inf }p(x)>1$, and $\beta
\in L^{\infty }\left( \partial \Omega \right) $ such that $\beta ^{-}=%
\underset{x\in \partial \Omega }{\inf }\beta (x)>0$.

In recent years, variable exponent function spaces, especially Sobolev
spaces, have been very useful for exploring the existence of solutions of
partial differential equations involving $p(x)$-Laplacian. There are also
several applications in this area, such as electrorheological fluids,\ image
processing, elastic mechanics, fluid dynamics and calculus of variations,
see \cite{Ha}, \cite{mih}, \cite{Ru}, \cite{Zh}. Nonlinear and variational
problems involving $p(x)$-Laplacian operator have attracted great attention
in the recent last years, see \cite{ayd}, \cite{ay}, \cite{aydn}, \cite{den}%
, \cite{fan}, \cite{mi}, \cite{UnalAy2}.

In 2009, Deng \cite{deng} studied the following Robin problem%
\begin{equation}
\left\{ 
\begin{array}{cc}
-\func{div}\left( \left\vert \nabla u\right\vert ^{p(x)-2}\nabla u\right)
=\lambda f\left( x,u\right) , & x\in \Omega \\ 
\left\vert \nabla u\right\vert ^{p(x)-2}\frac{\partial u}{\partial \upsilon }%
+\beta (x)\left\vert u\right\vert ^{p(x)-2}u=0, & x\in \partial \Omega ,%
\end{array}%
\right.  \label{P2}
\end{equation}%
under appropriate conditions on $f$, and obtained that there exists $\lambda
^{\ast }>0$ such that the problem (\ref{P2}) has at least two positive
solutions if $\lambda \in \left( 0,\lambda ^{\ast }\right) $, has at least
one positive solution if $\lambda =\lambda ^{\ast }<+\infty $ and has no
solution if $\lambda >\lambda ^{\ast }$. Moreover, Kefi \cite{ke}
investigated the Robin problem 
\begin{equation}
\left\{ 
\begin{array}{cc}
-\func{div}\left( \left\vert \nabla u\right\vert ^{p(x)-2}\nabla u\right)
=\lambda V(x)\left\vert u\right\vert ^{q(x)-2}u, & x\in \Omega \\ 
\left\vert \nabla u\right\vert ^{p(x)-2}\frac{\partial u}{\partial \upsilon }%
+\beta (x)\left\vert u\right\vert ^{p(x)-2}u=0, & x\in \partial \Omega ,%
\end{array}%
\right.  \label{P3}
\end{equation}%
where $V\in L^{s(.)}\left( \partial \Omega \right) $ and $V>0$ with $%
1<q(x)<p(x)<s(x)<N$ for all $x\in \overline{\Omega }.$ Allaoui \cite{al}
obtained two different situations of the existence of solutions for the
problem (\ref{P3}) using variational methods. Chung \cite{Ch} have more
general results from Deng \cite{deng} and Kefi \cite{ke} under some suitable
assumptions for $q$ and $V$.

Inspired by the articles mentioned above, we will obtain some new compact
embedding theorems in double weighted variable exponent Sobolev spaces $%
W_{a,b}^{1,p(.)}\left( \Omega \right) $, and show the existence of several
different weak solutions of the problem (\ref{P}) in these spaces.
Therefore, we will investigate and generalize the results presented in
Allaoui \cite{al}, Chung \cite{Ch}, Deng \cite{deng} and Kefi \cite{ke}.

In section 2, we will mention about several properties of $%
W_{a,b}^{1,p(.)}\left( \Omega \right) $ and energy functional $J_{\lambda }$%
. Moreover, we will define a more general norm in $W_{a,b}^{1,p(.)}\left(
\Omega \right) $ compared to the norm given by Deng \cite{deng} using
compact embedding theorems. In section 3, we will investigate and prove the
main results of the paper.

\section{Notation and preliminaries}

Let $\Omega $ be a bounded open subset of $%
\mathbb{R}
^{N}$ with a smooth boundary $\partial \Omega $ and set 
\begin{equation*}
C_{+}\left( \overline{\Omega }\right) =\left\{ p\in C\left( \overline{\Omega 
}\right) :\inf_{x\in \overline{\Omega }}p(x)>1\right\} .
\end{equation*}%
For any $p\in C_{+}\left( \overline{\Omega }\right) $, we denote%
\begin{equation*}
1<p^{-}=\inf_{x\in \Omega }p(x)\leq p^{+}=\sup_{x\in \Omega }p(x)<\infty .
\end{equation*}%
Let $p\in C_{+}\left( \overline{\Omega }\right) $. The space $%
L^{p(.)}(\Omega )$ is defined by%
\begin{equation*}
L^{p(.)}(\Omega )=\left\{ u\left\vert u:\Omega \rightarrow 
\mathbb{R}
\text{ is measurable and }\int\limits_{\Omega }\left\vert u(x)\right\vert
^{p(x)}dx<\infty \right. \right\}
\end{equation*}%
with the (Luxemburg) norm%
\begin{equation*}
\left\Vert u\right\Vert _{p(.)}=\inf \left\{ \lambda >0:\varrho
_{p(.)}\left( \frac{u}{\lambda }\right) \leq 1\right\}
\end{equation*}%
where 
\begin{equation*}
\varrho _{p(.)}(u)=\int\limits_{\Omega }\left\vert u(x)\right\vert ^{p(x)}dx%
\text{,}
\end{equation*}%
see \cite{ko}. A measurable and locally integrable function $a:\Omega
\longrightarrow \left( 0,\infty \right) $ is called a weight function.
Define the weighted variable exponent Lebesgue space by%
\begin{equation*}
L_{a}^{p(.)}(\Omega )=\left\{ u\left\vert u:\Omega \longrightarrow 
\mathbb{R}
\text{ measurable and }\int\limits_{\Omega }\left\vert u(x)\right\vert
^{p(x)}a(x)dx<+\infty \right. \right\}
\end{equation*}%
with the Luxemburg norm 
\begin{equation*}
\left\Vert u\right\Vert _{p(.),a}=\inf \left\{ \tau >0:\varrho
_{p(.),a}\left( \frac{u}{\tau }\right) \leq 1\right\}
\end{equation*}%
where 
\begin{equation*}
\varrho _{p(.),a}(u)=\int\limits_{\Omega }\left\vert u(x)\right\vert
^{p(x)}a(x)dx\text{.}
\end{equation*}%
The space $L_{a}^{p(.)}(\Omega )$ is a Banach space with respect to $%
\left\Vert .\right\Vert _{p(.),a}.$ Moreover, $u\in L_{a}^{p(.)}(\Omega )$
if and only if $\left\Vert u\right\Vert _{p(.),a}=\left\Vert ua^{\frac{1}{%
p(.)}}\right\Vert _{p(.)}<\infty $. The dual space of $L_{a}^{p(.)}(\Omega )$
is $L_{a^{\ast }}^{q(.)}(\Omega )$ where $\frac{1}{p(.)}+\frac{1}{q(.)}=1$
and $a^{\ast }=a^{1-q\left( .\right) }=a^{-\frac{1}{p(.)-1}}.$ If $a\in
L^{\infty }\left( \Omega \right) $, then $L_{a}^{p(.)}=L^{p(.)},$ see \cite%
{ayn}.

\begin{proposition}
(see \cite{FanZ}) For all $u,v\in L_{a}^{p(.)}\left( \Omega \right) $, we
have

\begin{enumerate}
\item[\textit{(i)}] $\left\Vert u\right\Vert _{p(.),a}<1$ (resp.$=1,>1$) if
and only if $\varrho _{p(.),a}(u)<1$ (resp.$=1,>1$),

\item[\textit{(ii)}] $\left\Vert u\right\Vert _{p(.),a}^{p^{-}}\leq \varrho
_{p(.),a}(u)\leq \left\Vert u\right\Vert _{p(.),a}^{p^{+}}$ with $\left\Vert
u\right\Vert _{p(.),a}>1,$

\item[\textit{(iii)}] $\left\Vert u\right\Vert _{p(.),a}^{p^{+}}\leq \varrho
_{p(.),a}(u)\leq \left\Vert u\right\Vert _{p(.),a}^{p^{-}}$ with $\left\Vert
u\right\Vert _{p(.),a}<1$

\item[\textit{(iv)}] $\min \left\{ \left\Vert u\right\Vert
_{p(.),a}^{p^{-}},\left\Vert u\right\Vert _{p(.),a}^{p^{+}}\right\} \leq
\varrho _{p(.),a}(u)\leq \max \left\{ \left\Vert u\right\Vert
_{p(.),a}^{p^{-}},\left\Vert u\right\Vert _{p(.),a}^{p^{+}}\right\} $,

\item[\textit{(v)}] $\min \left\{ \varrho _{p(.),a}(u)^{\frac{1}{p^{-}}%
},\varrho _{p(.),a}(u)^{\frac{1}{p^{+}}}\right\} \leq \left\Vert
u\right\Vert _{p(.),a}\leq \max \left\{ \varrho _{p(.),a}(u)^{\frac{1}{p^{-}}%
},\varrho _{p(.),a}(u)^{\frac{1}{p^{+}}}\right\} $,

\item[\textit{(vi)}] $\varrho _{p(.),a}(u-v)\rightarrow 0$ if and only if $%
\left\Vert u-v\right\Vert _{p(.),a}\rightarrow 0$.
\end{enumerate}
\end{proposition}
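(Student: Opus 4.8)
The plan is to derive all six items from two facts: a homogeneity-type estimate for the modular, and the attainment of the infimum defining the Luxemburg norm. First I would record the elementary pointwise identity $|\sigma u(x)|^{p(x)}a(x)=\sigma^{p(x)}|u(x)|^{p(x)}a(x)$; since $\min\{\sigma^{p^{-}},\sigma^{p^{+}}\}\le\sigma^{p(x)}\le\max\{\sigma^{p^{-}},\sigma^{p^{+}}\}$ for every $\sigma>0$ and a.e. $x\in\Omega$, integration over $\Omega$ yields
\[
\min\{\sigma^{p^{-}},\sigma^{p^{+}}\}\,\varrho_{p(.),a}(u)\le\varrho_{p(.),a}(\sigma u)\le\max\{\sigma^{p^{-}},\sigma^{p^{+}}\}\,\varrho_{p(.),a}(u),
\]
so that $\sigma^{p^{-}}\varrho_{p(.),a}(u)\le\varrho_{p(.),a}(\sigma u)\le\sigma^{p^{+}}\varrho_{p(.),a}(u)$ when $\sigma\ge1$, and the two outer exponents swap when $0<\sigma\le1$. (One could instead reduce to the unweighted case via $\|u\|_{p(.),a}=\|ua^{1/p(.)}\|_{p(.)}$, recorded above, and invoke the classical statement, but the direct route is equally short.)

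The second ingredient is the normalization identity $\varrho_{p(.),a}\big(u/\|u\|_{p(.),a}\big)=1$ for every $u\in L_{a}^{p(.)}(\Omega)\setminus\{0\}$. To obtain it I would observe, using the scaling estimate, that $\lambda\mapsto\varrho_{p(.),a}(u/\lambda)$ is finite for all $\lambda>0$ and strictly decreasing, with $\varrho_{p(.),a}(u/\lambda)\to0$ as $\lambda\to\infty$ and $\varrho_{p(.),a}(u/\lambda)\to+\infty$ as $\lambda\to0^{+}$; continuity of this map follows from dominated convergence since $p^{+}<\infty$. Hence $\{\lambda>0:\varrho_{p(.),a}(u/\lambda)\le1\}$ is a closed half-line whose left endpoint is $\|u\|_{p(.),a}$, and the value $1$ is attained there. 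This continuity/attainment step is the only genuinely analytic point and, in my view, the main obstacle; everything else is bookkeeping.

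Granting these, (i) follows by writing $u=\|u\|_{p(.),a}\cdot\big(u/\|u\|_{p(.),a}\big)$ and combining the scaling estimate with the normalization identity: if, say, $\|u\|_{p(.),a}<1$ then $\varrho_{p(.),a}(u)\le\|u\|_{p(.),a}^{p^{-}}<1$, while the converse implication and the cases $\|u\|_{p(.),a}=1$ and $\|u\|_{p(.),a}>1$ are treated the same way using the strict monotonicity of $\lambda\mapsto\varrho_{p(.),a}(u/\lambda)$. For (ii) and (iii), apply the scaling estimate to $u/\|u\|_{p(.),a}$ with $\sigma=\|u\|_{p(.),a}$ and use $\varrho_{p(.),a}(u/\|u\|_{p(.),a})=1$: this gives $\|u\|_{p(.),a}^{p^{-}}\le\varrho_{p(.),a}(u)\le\|u\|_{p(.),a}^{p^{+}}$ when $\|u\|_{p(.),a}>1$, and the same with the two exponents interchanged when $\|u\|_{p(.),a}<1$.

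Finally, (iv) is the union of (ii), (iii) and the trivial case $\|u\|_{p(.),a}=1$ (where all four quantities equal $1$), split according to whether $\|u\|_{p(.),a}\ge1$ or $\le1$; by (i) this dichotomy coincides with $\varrho_{p(.),a}(u)\ge1$ or $\le1$. Statement (v) follows from (iv) by applying the increasing maps $t\mapsto t^{1/p^{-}}$ and $t\mapsto t^{1/p^{+}}$ to the two inequalities and tracking which root is larger in each regime (equivalently, one reruns the scaling estimate with $\sigma=\|u\|_{p(.),a}^{-1}$). For (vi), if $\|u-v\|_{p(.),a}\to0$ then eventually $\|u-v\|_{p(.),a}<1$, so (iii) gives $\varrho_{p(.),a}(u-v)\le\|u-v\|_{p(.),a}^{p^{-}}\to0$; conversely, if $\varrho_{p(.),a}(u-v)\to0$ then eventually $\varrho_{p(.),a}(u-v)<1$, hence $\|u-v\|_{p(.),a}<1$ by (i), and then (iii) yields $\|u-v\|_{p(.),a}\le\varrho_{p(.),a}(u-v)^{1/p^{+}}\to0$.
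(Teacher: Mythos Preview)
Your argument is correct. The two ingredients you isolate---the scaling inequality $\min\{\sigma^{p^{-}},\sigma^{p^{+}}\}\varrho_{p(.),a}(u)\le\varrho_{p(.),a}(\sigma u)\le\max\{\sigma^{p^{-}},\sigma^{p^{+}}\}\varrho_{p(.),a}(u)$ and the normalization $\varrho_{p(.),a}(u/\|u\|_{p(.),a})=1$---do carry all six items, and your justification of the normalization (continuity of $\lambda\mapsto\varrho_{p(.),a}(u/\lambda)$ via dominated convergence using $p^{+}<\infty$, plus strict monotonicity from $a>0$ a.e.\ and $u\not\equiv 0$) is sound.

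There is nothing to compare against: the paper does not prove this proposition but merely cites \cite{FanZ} for it, treating it as a known fact about variable-exponent modular spaces (and indeed, as you note, the weighted case reduces to the unweighted one via the isometry $u\mapsto ua^{1/p(.)}$). So your write-up supplies strictly more than the paper does here. If anything, you could shorten the presentation by taking the unweighted statement from \cite{FanZ} as given and invoking that isometry in one line, but the direct argument you give is equally valid and entirely standard.
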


\begin{definition}
Let $a^{-\frac{1}{p(.)-1}}\in L_{loc}^{1}\left( \Omega \right) $. The
weighted variable exponent Sobolev space $W_{a}^{k,p(.)}\left( \Omega
\right) $ is defined by%
\begin{equation*}
W_{a}^{k,p(.)}\left( \Omega \right) =\left\{ u\in L_{a}^{p(.)}(\Omega
):D^{\alpha }u\in L_{a}^{p(.)}(\Omega ),0\leq \left\vert \alpha \right\vert
\leq k\right\}
\end{equation*}%
equipped with the norm 
\begin{equation*}
\left\Vert u\right\Vert _{a}^{k,p(.)}=\sum\limits_{0\leq \left\vert \alpha
\right\vert \leq k}\left\Vert D^{\alpha }u\right\Vert _{p(.),a}
\end{equation*}%
where $\alpha \in 
\mathbb{N}
_{0}^{N}$ is a multi-index, $\left\vert \alpha \right\vert =\alpha
_{1}+\alpha _{2}+...+\alpha _{N}$ and $D^{\alpha }=\frac{\partial
^{\left\vert \alpha \right\vert }}{\partial _{x_{1}}^{\alpha
_{1}}...\partial _{x_{N}}^{\alpha _{N}}}$. In particular, the space $%
W_{a}^{1,p(.)}\left( \Omega \right) $ is defined by 
\begin{equation*}
W_{a}^{1,p(.)}\left( \Omega \right) =\left\{ u\in L_{a}^{p(.)}(\Omega
):\left\vert \nabla u\right\vert \in L_{a}^{p(.)}(\Omega )\right\}
\end{equation*}%
equipped with the norm 
\begin{equation*}
\left\Vert u\right\Vert _{a}^{1,p(.)}=\left\Vert u\right\Vert
_{p(.),a}+\left\Vert \nabla u\right\Vert _{p(.),a}.
\end{equation*}%
The dual space of $W_{a}^{1,p(.)}\left( \Omega \right) $ is $W_{a^{\ast
}}^{-1,r(.)}\left( \Omega \right) $ where $\frac{1}{p(.)}+\frac{1}{r(.)}=1$
and $a^{\ast }=a^{1-r\left( .\right) }=a^{-\frac{1}{p(.)-1}}$. Moreover, the
space $W_{a}^{1,p(.)}\left( \Omega \right) $ is a separable and reflexive
Banach space.
\end{definition}

Let $a^{-\frac{1}{p(.)-1}},b^{-\frac{1}{p(.)-1}}\in L_{loc}^{1}\left( \Omega
\right) $. The double weighted variable exponent Sobolev space $%
W_{a,b}^{1,p(.)}\left( \Omega \right) $ is defined by%
\begin{equation*}
W_{a,b}^{1,p(.)}\left( \Omega \right) =\left\{ u\in L_{b}^{p(.)}(\Omega
):\left\vert \nabla u\right\vert \in L_{a}^{p(.)}(\Omega )\right\}
\end{equation*}%
equipped with the norm%
\begin{equation*}
\left\Vert u\right\Vert _{a,b}^{1,p(.)}=\left\Vert \nabla u\right\Vert
_{p(.),a}+\left\Vert u\right\Vert _{p(.),b}.
\end{equation*}

We can define the space $L_{a}^{p(.)}(\partial \Omega )$ similarly by%
\begin{equation*}
L_{a}^{p(.)}(\partial \Omega )=\left\{ u\left\vert u:\partial \Omega
\longrightarrow 
\mathbb{R}
\text{ measurable and }\int\limits_{\partial \Omega }\left\vert
u(x)\right\vert ^{p(x)}a(x)d\sigma <+\infty \right. \right\}
\end{equation*}%
with the Luxemburg norm, where $d\sigma $ is the measure on the boundary of $%
\Omega $. It is easy to see that $\left( L_{a}^{p(.)}(\partial \Omega
),\left\Vert .\right\Vert _{p(.),a,\partial \Omega }\right) $ is a Banach
space. If $a\in L^{\infty }\left( \Omega \right) $, then $%
L_{a}^{p(.)}=L^{p(.)}.$

\begin{proposition}
\label{pro2}(see \cite{Liu})Let $\Omega $ be a (bounded or unbounded) domain
in $%
\mathbb{R}
^{N}$ with smooth boundary, $p,q\in L_{+}^{\infty }\left( \Omega \right) ,$ $%
p\in C^{0,1}\left( \overline{\Omega }\right) $, $p^{+}<N$, $1<q(x)<\infty $
and $-N<\alpha _{1}(x)<N\left( p(x)-1\right) .$ If $\underset{x\in \Omega }{%
\text{esssup}}\left( \frac{\alpha _{1}(x)}{q(x)}+\frac{N}{q(x)}-\frac{N}{p(x)%
}+1\right) <0$ and $\underset{x\in \Omega }{\text{essinf}}\left( \frac{N}{%
q(x)}-\frac{N}{p(x)}+1\right) >0$, then the embedding $W_{a,b}^{1,p(.)}%
\left( \Omega \right) \hookrightarrow L_{b}^{q(.)}(\Omega )$ is compact.
\end{proposition}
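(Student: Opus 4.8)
The plan is to derive compactness from two ingredients: a \emph{continuous} embedding that the hypotheses supply with a strict margin, and the classical Rellich--Kondrachov-type compactness for variable-exponent Sobolev spaces on subdomains compactly contained in $\Omega$, where the weights $a,b$ are bounded away from $0$ and $\infty$. The two strict pointwise inequalities in the statement are precisely the source of this margin: $\underset{x\in\Omega}{\text{essinf}}\bigl(\tfrac{N}{q(x)}-\tfrac{N}{p(x)}+1\bigr)>0$ says that $q(\cdot)$ stays uniformly below the Sobolev critical exponent $p^{\ast}(\cdot)=\tfrac{Np(\cdot)}{N-p(\cdot)}$, while the first inequality, which also encodes the weight exponent $\alpha_{1}$, says it stays uniformly below the \emph{weighted} critical exponent. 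Heuristically this leaves ``room'' for a strictly larger target space, and it is that room which upgrades continuity to compactness.

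First I would establish the continuous embedding, with a gain. Using the identity $\|u\|_{q(\cdot),b}=\bigl\|u\,b^{1/q(\cdot)}\bigr\|_{q(\cdot)}$ (and the analogue for $a$) together with the weighted Sobolev--Poincar\'e inequality for variable exponents valid under $p\in C^{0,1}(\overline{\Omega})$ and $p^{+}<N$, one obtains $W_{a,b}^{1,p(\cdot)}(\Omega)\hookrightarrow L_{b}^{q(\cdot)}(\Omega)$ continuously. Because the two conditions hold with \emph{strict} sign, there are $\varepsilon>0$, an exponent $\widetilde{q}\in C_{+}(\overline{\Omega})$ with $q(x)+\varepsilon\le\widetilde{q}(x)$ for all $x$, and a constant $C_{0}$ for which
\[
\|u\|_{\widetilde{q}(\cdot),b}\le C_{0}\,\|u\|_{a,b}^{1,p(\cdot)}\qquad\text{for every }u\in W_{a,b}^{1,p(\cdot)}(\Omega),
\]
i.e. the embedding even into the slightly larger weighted space $L_{b}^{\widetilde q(\cdot)}(\Omega)$ is continuous.

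Now the compactness argument proper. Let $(u_{n})$ be bounded in $W_{a,b}^{1,p(\cdot)}(\Omega)$; by reflexivity pass to a subsequence with $u_{n}\rightharpoonup u$ and set $v_{n}=u_{n}-u$, which is bounded and satisfies $v_{n}\rightharpoonup 0$. It suffices to show $\|v_{n}\|_{q(\cdot),b}\to 0$. Choose a relatively compactly contained open subset $\Omega'\Subset\Omega$ (avoiding the boundary and any singular set of the weights) on which $0<c\le a(x),b(x)\le C<\infty$. There the weighted norm is equivalent to the unweighted $W^{1,p(\cdot)}(\Omega')$ norm, and since $q(x)<p^{\ast}(x)$ on $\Omega'$, the classical compact embedding $W^{1,p(\cdot)}(\Omega')\hookrightarrow L^{q(\cdot)}(\Omega')$ yields, along a further subsequence, $\|v_{n}\|_{L_{b}^{q(\cdot)}(\Omega')}\to 0$. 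On the complement, H\"older's inequality for variable exponents with the conjugate pair $\bigl(\widetilde q(\cdot)/q(\cdot),\,(\widetilde q(\cdot)/q(\cdot))'\bigr)$ bounds $\|v_{n}\|_{L_{b}^{q(\cdot)}(\Omega\setminus\Omega')}$ by a power of $\|v_{n}\|_{\widetilde q(\cdot),b}$ --- uniformly bounded by $C_{0}\sup_{n}\|u_{n}-u\|_{a,b}^{1,p(\cdot)}$ from Step~1 --- times a factor controlled by $\int_{\Omega\setminus\Omega'}b(x)\,dx$, which the weight hypotheses make finite and which tends to $0$ as $\Omega'$ exhausts $\Omega$.

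Finally, combining: given $\eta>0$, first fix $\Omega'$ so that the contribution over $\Omega\setminus\Omega'$ is $<\eta$ uniformly in $n$, then let $n\to\infty$ to remove the contribution over $\Omega'$; this gives $\limsup_{n}\|v_{n}\|_{q(\cdot),b}\le C\eta$, and as $\eta$ was arbitrary, $u_{n}\to u$ in $L_{b}^{q(\cdot)}(\Omega)$. A standard ``subsequence of every subsequence'' remark then upgrades the conclusion from a subsequence to the whole sequence, giving compactness of the embedding. I expect the main obstacle to be Step~1: converting the two strict pointwise inequalities into a bona fide continuous embedding into a \emph{strictly larger} weighted variable-exponent target needs the weighted variable-exponent Sobolev inequality together with careful bookkeeping of how $\alpha_{1}$ shifts the critical exponent; once that margin is in hand, the localization-plus-H\"older-tail scheme in Steps~2--3 is routine.
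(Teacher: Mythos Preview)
The paper does not prove this proposition at all: it is quoted verbatim from Liu \cite{Liu} and simply cited, so there is nothing to compare your argument against on the paper's side. Any detailed proof therefore goes beyond what the authors supply.

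As for your sketch itself, the overall architecture---continuous embedding with a strict margin, then localisation plus a H\"older tail---is the standard way such compact embeddings are obtained, and is in spirit what the cited reference does. Two points deserve care, however. First, you assume that on every $\Omega'\Subset\Omega$ the weights $a,b$ are bounded above and below; nothing in the stated hypotheses guarantees this, and in Liu's setting the role of $\alpha_{1}$ is precisely to encode the weight (it is the exponent in a power-type weight), so singularities may sit in the interior and cannot simply be ``avoided''. Second, your tail bound hinges on $\int_{\Omega\setminus\Omega'}b\,dx\to 0$, i.e.\ on $b\in L^{1}(\Omega)$, which again is not among the hypotheses and can fail for unbounded $\Omega$. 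Both issues are handled in \cite{Liu} by working directly with the weighted Sobolev inequality (your Step~1) in its sharp form rather than by stripping the weights on a good subdomain; your own final paragraph correctly identifies Step~1 as the crux, and in fact once that weighted inequality with gain is established, compactness follows from it more directly than through your localisation scheme.
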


For $A\subset \overline{\Omega }$, denote $p^{-}(A)=\underset{x\in A}{\text{%
inf}}p(x)$ and $p^{+}(A)=\underset{x\in A}{\text{sup}}p(x)$. For any $x\in
\partial \Omega $ and $r\in C\left( \partial \Omega ,%
\mathbb{R}
\right) $ with $r^{-}=\underset{x\in \partial \Omega }{\text{inf}}r(x)>1,$
we define%
\begin{equation*}
p^{\partial }\left( x\right) =\left( p(x)\right) ^{\partial }=\left\{ 
\begin{array}{cc}
\frac{\left( N-1\right) p(x)}{N-p(x)}, & \text{if }p\left( x\right) <N, \\ 
\infty , & \text{if }p\left( x\right) \geq N,%
\end{array}%
\right.
\end{equation*}%
\begin{equation*}
p_{r(x)}^{\partial }\left( x\right) =\frac{r(x)-1}{r(x)}p^{\partial }\left(
x\right) .
\end{equation*}

\begin{theorem}
\label{teo2}(see \cite{de})Assume that the boundary of $\Omega $ possesses
the cone property and $p\in C\left( \overline{\Omega }\right) $ with $%
p^{-}>1 $. Suppose that $a\in L^{r(.)}(\partial \Omega )$, $r\in C\left(
\partial \Omega \right) $ with $r(x)>\frac{p^{\partial }\left( x\right) }{%
p^{\partial }\left( x\right) -1}$ for all $x\in \partial \Omega $. If $q\in
C\left( \partial \Omega \right) $ and $1\leq q(x)<p_{r(x)}^{\partial }\left(
x\right) $ for all $x\in \partial \Omega $, then there the embedding $%
W^{1,p(.)}\left( \Omega \right) \hookrightarrow L_{a}^{q(.)}(\partial \Omega
)$ is compact. In particular, the embedding $W^{1,p(.)}\left( \Omega \right)
\hookrightarrow L^{q(.)}(\partial \Omega )$ is compact where $1\leq
q(x)<p^{\partial }\left( x\right) $ for all $x\in \partial \Omega $.
\end{theorem}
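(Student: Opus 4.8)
The plan is to reduce the weighted trace embedding to the (known) \emph{unweighted} compact trace embedding for variable exponent Sobolev spaces, combined with the generalized Hölder inequality on the boundary measure space $(\partial\Omega,d\sigma)$. The fact I would lean on is this: under the cone property, $p\in C(\overline\Omega)$ and $p^->1$, the trace map $W^{1,p(.)}(\Omega)\hookrightarrow L^{s(.)}(\partial\Omega)$ is compact whenever $s\in C(\partial\Omega)$ satisfies $1\le s(x)<p^{\partial}(x)$ on $\partial\Omega$ (on the set $\{x:p(x)\ge N\}$, where $p^{\partial}=\infty$, this imposes no restriction). This is exactly the last assertion of the statement, and everything else is built on top of it.

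First I would carry out the exponent bookkeeping. Put $r'(x)=\frac{r(x)}{r(x)-1}$ for the pointwise conjugate of $r$, and set $t(x)=q(x)\,r'(x)$. Since $r^->1$ and $q,r\in C(\partial\Omega)$, we have $t\in C(\partial\Omega)$ with $t^->1$, and the hypothesis $q(x)<p^{\partial}_{r(x)}(x)=\frac{r(x)-1}{r(x)}\,p^{\partial}(x)$ is \emph{equivalent} to $t(x)<p^{\partial}(x)$ for all $x\in\partial\Omega$. Hence, by the unweighted theorem,
\[
W^{1,p(.)}(\Omega)\hookrightarrow L^{t(.)}(\partial\Omega)\ \text{is compact.}
\]
(The condition $r(x)>\frac{p^{\partial}(x)}{p^{\partial}(x)-1}$ is just the $q\equiv1$ instance of $t(x)<p^{\partial}(x)$, so it is automatically compatible with the hypotheses.)

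Next, for any $w\in W^{1,p(.)}(\Omega)$ — and in particular for differences $w=u_n-u$ — I would estimate the boundary modular by Hölder's inequality with the \emph{variable} conjugate pair $(r(.),r'(.))$:
\[
\varrho_{q(.),a,\partial\Omega}(w)=\int_{\partial\Omega}|w(x)|^{q(x)}a(x)\,d\sigma
\le C\,\|a\|_{r(.),\partial\Omega}\,\big\|\,|w|^{q(.)}\,\big\|_{r'(.),\partial\Omega}.
\]
The key identity is $\varrho_{r'(.),\partial\Omega}\!\big(|w|^{q(.)}\big)=\int_{\partial\Omega}|w|^{q(x)r'(x)}\,d\sigma=\varrho_{t(.),\partial\Omega}(w)$, so the boundary analogue of Proposition~1 bounds $\big\|\,|w|^{q(.)}\,\big\|_{r'(.),\partial\Omega}$ by a continuous function of $\|w\|_{t(.),\partial\Omega}$ that vanishes at $0$. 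Together with the (continuous) embedding into $L^{t(.)}(\partial\Omega)$ this already gives boundedness of $W^{1,p(.)}(\Omega)\hookrightarrow L_a^{q(.)}(\partial\Omega)$. For compactness, take $(u_n)$ bounded in $W^{1,p(.)}(\Omega)$; by the compact embedding of the previous display there is a subsequence with $u_n\to u$ in $L^{t(.)}(\partial\Omega)$, hence $\varrho_{t(.),\partial\Omega}(u_n-u)\to0$, hence $\varrho_{q(.),a,\partial\Omega}(u_n-u)\to0$ by the displayed estimate, hence $\|u_n-u\|_{q(.),a,\partial\Omega}\to0$ by the modular–norm equivalence (Proposition~1(vi) transplanted to the boundary). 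A "subsequence of every subsequence" argument upgrades this to convergence of the full sequence. The ``in particular'' assertion is the special case $a\equiv1$ (or $a\in L^\infty(\Omega)$), where $L^{q(.)}_a(\partial\Omega)=L^{q(.)}(\partial\Omega)$ and the claim reduces to the unweighted theorem.

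The main obstacle is not conceptual but technical: one must (i) transplant the norm–modular inequalities of Proposition~1 to the measure space $(\partial\Omega,d\sigma)$ and make the implication ``$\varrho\to0\Rightarrow\|\cdot\|\to0$'' precise there; (ii) justify the generalized Hölder inequality with the variable conjugate pair $(r(.),r'(.))$ on the boundary; and (iii) handle the region $\{x:p(x)\ge N\}$ where $p^{\partial}=\infty$, either by the convention that nothing is required there or by replacing $p$ with $\min\{p,N-\varepsilon\}$ so that all the auxiliary exponents remain finite. Once these points are settled, the argument is the routine ``Hölder plus a known compact embedding into an auxiliary $L^{t(.)}$ space'' scheme.
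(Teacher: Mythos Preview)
The paper does not actually prove this theorem: it is quoted verbatim from \cite{de} (Deng, \emph{J. Math. Anal. Appl.} 339 (2008), 925--937) and stated without proof, so there is no ``paper's own proof'' to compare against. Your proposal is correct and is in fact the standard argument used in \cite{de}: one takes the unweighted compact trace embedding $W^{1,p(.)}(\Omega)\hookrightarrow L^{t(.)}(\partial\Omega)$ for $t(x)<p^{\partial}(x)$ as the baseline, sets $t(x)=q(x)r'(x)$, and passes to the weighted space $L_a^{q(.)}(\partial\Omega)$ via the variable-exponent H\"older inequality on $(\partial\Omega,d\sigma)$ with the conjugate pair $(r(.),r'(.))$. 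The technical points you flag --- transplanting the norm--modular relations to the boundary, the H\"older inequality there, and the convention at points where $p(x)\ge N$ --- are exactly the ones that need to be checked, and they go through without surprises.
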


It is easy to see that $p_{r(x)}^{\partial }\left( x\right) <p^{\partial
}\left( x\right) $ and $p\left( x\right) <p^{\partial }\left( x\right) $.
Thus we have the following result under the same assumptions in Theorem \ref%
{teo2}.

\begin{corollary}
(see \cite{de})

\begin{enumerate}
\item[\textit{(i)}] The embedding $W^{1,p(.)}\left( \Omega \right)
\hookrightarrow L^{p(.)}(\partial \Omega )$ is compact where $1\leq
p(x)<p^{\partial }\left( x\right) $ for all $x\in \partial \Omega $.

\item[\textit{(ii)}] The embedding $W^{1,p(.)}\left( \Omega \right)
\hookrightarrow L_{a}^{p(.)}(\partial \Omega )$ is compact where $1\leq
p(x)<p_{r(x)}^{\partial }\left( x\right) <p^{\partial }\left( x\right) $ for
all $x\in \partial \Omega $.
\end{enumerate}
\end{corollary}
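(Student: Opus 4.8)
The plan is to obtain both parts as immediate corollaries of Theorem~\ref{teo2}, specialized to the target exponent $q\equiv p$; no new argument is required, so the only point to check is that this choice of $q$ satisfies the hypotheses of Theorem~\ref{teo2}, i.e. the strict pointwise inequalities between $p$ and the critical boundary exponents $p^{\partial}$ and $p_{r(x)}^{\partial}$.

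First I would verify $p(x)<p^{\partial}(x)$ for every $x\in\partial\Omega$. If $p(x)\geq N$ this is trivial since $p^{\partial}(x)=\infty$. If $p(x)<N$, then $p^{\partial}(x)=\frac{(N-1)p(x)}{N-p(x)}$, and dividing the desired inequality by $p(x)>0$ and clearing the positive denominator $N-p(x)$ reduces it to $N-p(x)<N-1$, that is, to $p(x)>1$, which holds because $p^{-}>1$. Hence $1\leq p(x)<p^{\partial}(x)$ on $\partial\Omega$, and the unweighted part of Theorem~\ref{teo2} applied with $q=p$ yields that $W^{1,p(.)}(\Omega)\hookrightarrow L^{p(.)}(\partial\Omega)$ is compact, which is (i).

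For (ii), the hypothesis already provides $1\leq p(x)<p_{r(x)}^{\partial}(x)$ for all $x\in\partial\Omega$, so $q=p$ is an admissible target exponent and, under the standing assumptions of Theorem~\ref{teo2} (in particular $a\in L^{r(.)}(\partial\Omega)$ with $r(x)>\frac{p^{\partial}(x)}{p^{\partial}(x)-1}$), Theorem~\ref{teo2} gives that $W^{1,p(.)}(\Omega)\hookrightarrow L_{a}^{p(.)}(\partial\Omega)$ is compact. I would also record the elementary estimate $p_{r(x)}^{\partial}(x)=\frac{r(x)-1}{r(x)}\,p^{\partial}(x)<p^{\partial}(x)$, valid since $0<\frac{r(x)-1}{r(x)}<1$ whenever $r(x)>1$, which is what justifies the double inequality $p(x)<p_{r(x)}^{\partial}(x)<p^{\partial}(x)$ displayed in the statement. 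There is essentially no obstacle: the entire content is the reduction to Theorem~\ref{teo2}, and the only mild subtlety is confirming that one is allowed to take $q$ equal to $p$ rather than strictly below it, which is exactly what the strict inequalities $p<p^{\partial}$ and $p<p_{r(x)}^{\partial}$ guarantee.
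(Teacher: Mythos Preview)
Your proposal is correct and follows exactly the paper's approach: the paper simply notes, immediately before the corollary, that $p_{r(x)}^{\partial}(x)<p^{\partial}(x)$ and $p(x)<p^{\partial}(x)$, and then states the corollary as an immediate consequence of Theorem~\ref{teo2} with $q=p$. Your write-up actually supplies more detail (the explicit verification that $p(x)<p^{\partial}(x)$ reduces to $p^{-}>1$) than the paper, which just says ``it is easy to see.''
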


\begin{theorem}
\label{teo3}(see \cite{aydn}) Let $a^{-\alpha \left( .\right) }\in
L^{1}\left( \Omega \right) $ with $\alpha \left( x\right) \in \left( \frac{N%
}{p(x)},\infty \right) \cap \left[ \frac{1}{p(x)-1},\infty \right) $. Then
the embedding $W_{a,b}^{1,p(.)}\left( \Omega \right) \hookrightarrow
W^{1,p_{\ast }(.)}\left( \Omega \right) $ is compact where $p_{\ast }(x)=%
\frac{\alpha \left( x\right) p(x)}{\alpha \left( x\right) +1}$.

\begin{corollary}
\label{cor2}If $p(x)<p_{\ast ,r(x)}^{\partial }\left( x\right) <p_{\ast
}^{\partial }\left( x\right) $ for all $x\in \partial \Omega $, then the
embedding $W_{a,b}^{1,p(.)}\left( \Omega \right) \hookrightarrow
L_{a}^{p(.)}(\partial \Omega )$ is compact.
\end{corollary}
\end{theorem}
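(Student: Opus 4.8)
The plan is to realize the claimed embedding as a composition of two compact embeddings already furnished by the excerpt. First I would check that, under the hypotheses of Theorem \ref{teo3}, the auxiliary exponent $p_{\ast}(x)=\frac{\alpha(x)p(x)}{\alpha(x)+1}$ belongs to $C_{+}(\overline{\Omega})$: it is continuous because $\alpha$ and $p$ are, and the inequality $p_{\ast}(x)>1$ is equivalent to $\alpha(x)\bigl(p(x)-1\bigr)>1$, which is ensured by $\alpha(x)\in\bigl[\frac{1}{p(x)-1},\infty\bigr)$ (strictly on the compact set $\overline{\Omega}$, so that $p_{\ast}^{-}>1$). Theorem \ref{teo3} then gives that the embedding $W_{a,b}^{1,p(.)}(\Omega)\hookrightarrow W^{1,p_{\ast}(.)}(\Omega)$ is compact.

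Next I would invoke Theorem \ref{teo2} with $p_{\ast}$ playing the role of the exponent ``$p$'' there, and with the boundary target exponent ``$q$'' taken to be the restriction $p|_{\partial\Omega}$. The conditions required are the cone property of $\partial\Omega$ and $a\in L^{r(.)}(\partial\Omega)$ with $r(x)>\frac{p_{\ast}^{\partial}(x)}{p_{\ast}^{\partial}(x)-1}$ (the standing assumptions, now read off for $p_{\ast}$), together with $1\le p(x)<\bigl(p_{\ast}\bigr)_{r(x)}^{\partial}(x)=p_{\ast,r(x)}^{\partial}(x)$ for every $x\in\partial\Omega$, which is exactly the hypothesis of the corollary. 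Theorem \ref{teo2} then yields that $W^{1,p_{\ast}(.)}(\Omega)\hookrightarrow L_{a}^{p(.)}(\partial\Omega)$ is compact.

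Composing the two maps finishes the proof, since the composition of a compact operator with a bounded one (here both are compact) is again compact; thus
\begin{equation*}
W_{a,b}^{1,p(.)}(\Omega)\hookrightarrow W^{1,p_{\ast}(.)}(\Omega)\hookrightarrow L_{a}^{p(.)}(\partial\Omega)
\end{equation*}
is a compact embedding, which is the assertion. The inequality $p_{\ast,r(x)}^{\partial}(x)<p_{\ast}^{\partial}(x)$ that appears in the statement is automatic, since $\frac{r(x)-1}{r(x)}<1$, and only records the trace-scale hierarchy.

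I expect the only genuine friction to be the bookkeeping that makes $p_{\ast}$ admissible in Theorem \ref{teo2}: confirming $p_{\ast}^{-}>1$, and checking that the weight exponent $r$ satisfies the constraint attached to $p_{\ast}$ rather than to $p$. Since $p_{\ast}<p$ one has $p_{\ast}^{\partial}<p^{\partial}$, and $t\mapsto\frac{t}{t-1}$ is decreasing, so $\frac{p_{\ast}^{\partial}(x)}{p_{\ast}^{\partial}(x)-1}>\frac{p^{\partial}(x)}{p^{\partial}(x)-1}$ is the slightly more demanding requirement on $r$; once this compatibility is in place the compactness is immediate from the two quoted results.
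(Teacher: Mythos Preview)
Your approach is correct and matches the intended derivation: the paper states Corollary~\ref{cor2} without proof, as an immediate consequence of composing the embedding of Theorem~\ref{teo3} with Theorem~\ref{teo2} applied to the auxiliary exponent $p_{\ast}$ and boundary target $q=p$, exactly as you outline. Your bookkeeping on $p_{\ast}^{-}>1$ and the compatibility condition on $r$ is the only content beyond the bare composition, and it is handled correctly.
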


If we use the same method in \cite[Theorem 2.1]{deng}, then we obtain the
following theorem which plays an important role for the existence of weak
solutions of the problem (\ref{P}).

\begin{theorem}
Let $\beta \in L^{\infty }\left( \partial \Omega \right) $ such that $\beta
^{-}=\underset{x\in \partial \Omega }{\inf }\beta (x)>0$. Then, the norm $%
\left\Vert u\right\Vert _{\beta (x)}$ is defined by%
\begin{equation*}
\left\Vert u\right\Vert _{\beta (x)}=\inf \left\{ \tau
>0:\int\limits_{\Omega }a(x)\left\vert \frac{\nabla u(x)}{\tau }\right\vert
^{p(x)}dx+\int\limits_{\partial \Omega }\beta (x)\left\vert \frac{u(x)}{\tau 
}\right\vert ^{p(x)}d\sigma \leq 1\right\}
\end{equation*}%
for any $u\in W_{a,b}^{1,p(.)}\left( \Omega \right) $. Moreover, $\left\Vert
.\right\Vert _{\beta (x)}$ and $\left\Vert .\right\Vert _{a,b}^{1,p(.)}$ are
equivalent on $W_{a,b}^{1,p(.)}\left( \Omega \right) $.
\end{theorem}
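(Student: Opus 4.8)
The plan is to view $\left\Vert \cdot \right\Vert _{\beta (x)}$ as the Luxemburg gauge of the convex modular
\begin{equation*}
\varrho _{\beta }(u)=\int_{\Omega }a(x)\left\vert \nabla u(x)\right\vert ^{p(x)}\,dx+\int_{\partial \Omega }\beta (x)\left\vert u(x)\right\vert ^{p(x)}\,d\sigma ,
\end{equation*}
which is finite on $W_{a,b}^{1,p(.)}\left( \Omega \right) $ because $\nabla u\in L_{a}^{p(.)}(\Omega )$, $\beta \in L^{\infty }\left( \partial \Omega \right) $, and $u\in L^{p(.)}(\partial \Omega )$ by the compact trace embedding of Theorem~\ref{teo3}/Corollary~\ref{cor2}. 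That $\left\Vert \cdot \right\Vert _{\beta (x)}$ is a norm is then the usual Luxemburg bookkeeping: finiteness and positive homogeneity are immediate, the triangle inequality follows from convexity of $\varrho _{\beta }$ (which in turn comes from the triangle inequality for $\left\vert \nabla \cdot \right\vert $ and convexity of $t\mapsto t^{p(x)}$, and the analogous estimate on $\partial \Omega $), and for definiteness one notes that $\left\Vert u\right\Vert _{\beta (x)}=0$ forces $\varrho _{\beta }(u/\tau )\leq 1$ for all $\tau >0$, so letting $\tau \rightarrow 0^{+}$ and using Fatou gives $\nabla u=0$ a.e.\ and (since $\beta ^{-}>0$) $u=0$ $\sigma $-a.e.\ on $\partial \Omega $; thus $u$ is constant on $\Omega $ with vanishing trace, hence $u\equiv 0$.

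For the equivalence, the bound $\left\Vert u\right\Vert _{\beta (x)}\leq C\left\Vert u\right\Vert _{a,b}^{1,p(.)}$ is the soft direction. From convexity of each of the two summands of $\varrho _{\beta }$ (splitting $u/\tau $ as a convex combination with $0$ in each piece), one gets $\varrho _{\beta }(u/\tau )\leq 1$ for $\tau =\left\Vert \nabla u\right\Vert _{p(.),a}+\left\Vert u\right\Vert _{p(.),\beta ,\partial \Omega }$, so
\begin{equation*}
\left\Vert u\right\Vert _{\beta (x)}\leq \left\Vert \nabla u\right\Vert _{p(.),a}+\left\Vert u\right\Vert _{p(.),\beta ,\partial \Omega }.
\end{equation*}
Since $\beta ^{-}\leq \beta (x)\leq \left\Vert \beta \right\Vert _{\infty }$, the norm $\left\Vert u\right\Vert _{p(.),\beta ,\partial \Omega }$ is equivalent to $\left\Vert u\right\Vert _{p(.),\partial \Omega }$, which is dominated by $\left\Vert u\right\Vert _{a,b}^{1,p(.)}$ via the bounded trace embedding, while $\left\Vert \nabla u\right\Vert _{p(.),a}\leq \left\Vert u\right\Vert _{a,b}^{1,p(.)}$ trivially.

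The converse $\left\Vert u\right\Vert _{a,b}^{1,p(.)}\leq C\left\Vert u\right\Vert _{\beta (x)}$ is the heart of the matter. Reading off the gauge, $\varrho _{\beta }(u/\tau )\leq 1$ for every $\tau >\left\Vert u\right\Vert _{\beta (x)}$ gives, on letting $\tau $ decrease to the gauge, both $\left\Vert \nabla u\right\Vert _{p(.),a}\leq \left\Vert u\right\Vert _{\beta (x)}$ and $\left\Vert u\right\Vert _{p(.),\beta ,\partial \Omega }\leq \left\Vert u\right\Vert _{\beta (x)}$, hence also $\left\Vert u\right\Vert _{p(.),\partial \Omega }\leq C\left\Vert u\right\Vert _{\beta (x)}$. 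Thus everything reduces to a Poincaré-type inequality
\begin{equation*}
\left\Vert u\right\Vert _{p(.),b}\leq C\left( \left\Vert \nabla u\right\Vert _{p(.),a}+\left\Vert u\right\Vert _{p(.),\partial \Omega }\right) ,\qquad u\in W_{a,b}^{1,p(.)}\left( \Omega \right) ,
\end{equation*}
which I would prove by contradiction and compactness: were it false there would be $u_{n}$ with $\left\Vert u_{n}\right\Vert _{p(.),b}=1$ and $\left\Vert \nabla u_{n}\right\Vert _{p(.),a}+\left\Vert u_{n}\right\Vert _{p(.),\partial \Omega }\rightarrow 0$; then $(u_{n})$ is bounded in the reflexive space $W_{a,b}^{1,p(.)}\left( \Omega \right) $, so along a subsequence $u_{n}\rightharpoonup u$, weak lower semicontinuity yields $\nabla u=0$ (so $u$ is constant), the compact trace embedding yields $u_{n}\rightarrow u$ in $L^{p(.)}(\partial \Omega )$ with zero limit trace (so $u\equiv 0$), and the compact embedding $W_{a,b}^{1,p(.)}\left( \Omega \right) \hookrightarrow L_{b}^{p(.)}(\Omega )$ (cf.\ Proposition~\ref{pro2}) yields $u_{n}\rightarrow 0$ in $L_{b}^{p(.)}(\Omega )$, contradicting $\left\Vert u_{n}\right\Vert _{p(.),b}=1$. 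Combining this inequality with the two one-sided estimates above closes the equivalence. The step I expect to be the main obstacle is exactly this Poincaré-type estimate, which forces one to invoke simultaneously the interior compact embedding into $L_{b}^{p(.)}(\Omega )$ and the compact trace embedding into $L^{p(.)}(\partial \Omega )$, together with the facts that $\Omega $ is connected (so a constant with vanishing trace is identically zero) and that the norm and modular functionals pass correctly to weak and strong limits; everything else is routine Luxemburg-gauge algebra based on the modular--norm relations recorded at the start of this section.
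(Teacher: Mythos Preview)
The paper does not give a proof of this theorem at all: it merely states that ``if we use the same method in \cite[Theorem 2.1]{deng}, then we obtain the following theorem'' and leaves it at that. So there is nothing in the paper to compare against line by line; your proposal is in fact considerably more detailed than what the paper offers.

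Your argument is correct and is precisely the standard route for such results (and is, in substance, the method of Deng being invoked). The Luxemburg-gauge verification that $\left\Vert \cdot \right\Vert _{\beta (x)}$ is a norm is routine; the easy direction of the equivalence via the continuous trace embedding is fine; and the key step, the Poincar\'e-type inequality
\[
\left\Vert u\right\Vert _{p(.),b}\leq C\left( \left\Vert \nabla u\right\Vert _{p(.),a}+\left\Vert u\right\Vert _{p(.),\partial \Omega }\right),
\]
established by the contradiction/compactness argument (bounded sequence, weak limit with vanishing gradient hence constant, compact trace forces the constant to be zero, compact interior embedding into $L_{b}^{p(.)}(\Omega )$ gives the contradiction), is exactly the mechanism behind Deng's Theorem~2.1 in the unweighted case and transfers verbatim once the weighted compact embeddings of Proposition~\ref{pro2}, Theorem~\ref{teo3} and Corollary~\ref{cor2} are available. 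The only implicit hypotheses you rely on---connectedness of $\Omega$ and validity of those compact embeddings---are assumed throughout the paper, so there is no gap.
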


\begin{proposition}
(see \cite{deng}) Let $I_{\beta (x)}(u)=\int\limits_{\Omega }a(x)\left\vert
\nabla u(x)\right\vert ^{p(x)}dx+\int\limits_{\partial \Omega }\beta
(x)\left\vert u(x)\right\vert ^{p(x)}d\sigma $ with $\beta ^{-}>0$. For any $%
u,u_{k}\in W_{a,b}^{1,p(.)}\left( \Omega \right) $ $\left( k=1,2,...\right) $%
, we have

\begin{enumerate}
\item[\textit{(i)}] $\left\Vert u\right\Vert _{\beta (x)}^{p^{-}}\leq
I_{\beta (x)}(u)\leq \left\Vert u\right\Vert _{\beta (x)}^{p^{+}}$ with $%
\left\Vert u\right\Vert _{\beta (x)}\geq 1,$

\item[\textit{(ii)}] $\left\Vert u\right\Vert _{\beta (x)}^{p^{+}}\leq
I_{\beta (x)}(u)\leq \left\Vert u\right\Vert _{\beta (x)}^{p^{-}}$ with $%
\left\Vert u\right\Vert _{\beta (x)}\leq 1,$

\item[\textit{(iii)}] $\min \left\{ \left\Vert u\right\Vert _{\beta
(x)}^{p^{-}},\left\Vert u\right\Vert _{\beta (x)}^{p^{+}}\right\} \leq
I_{\beta (x)}(u)\leq \max \left\{ \left\Vert u\right\Vert _{\beta
(x)}^{p^{-}},\left\Vert u\right\Vert _{\beta (x)}^{p^{+}}\right\} ,$

\item[\textit{(iv)}] $\left\Vert u-u_{k}\right\Vert _{\beta (x)}\rightarrow
0 $ if and only if $I_{\beta (x)}(u-u_{k})\rightarrow 0$ as $k\rightarrow
\infty $,

\item[\textit{(v)}] $\left\Vert u_{k}\right\Vert _{\beta (x)}\rightarrow
\infty $ if and only if $I_{\beta (x)}(u_{k})\rightarrow \infty $ as $%
k\rightarrow \infty $.
\end{enumerate}
\end{proposition}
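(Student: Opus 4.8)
The plan is to treat $I_{\beta(x)}$ as the modular attached to the equivalent norm $\left\Vert \cdot\right\Vert _{\beta(x)}$ and to reproduce, step by step, the modular--norm comparison already established in Proposition 1 for $\varrho_{p(.),a}$. The first thing I would check is that $I_{\beta(x)}(u)<\infty$ for every $u\in W_{a,b}^{1,p(.)}(\Omega)$: its first term equals $\varrho_{p(.),a}(\nabla u)$, which is finite since $\nabla u\in L_{a}^{p(.)}(\Omega)$, and its boundary term is bounded by $\left\Vert \beta\right\Vert _{L^{\infty}(\partial\Omega)}\int_{\partial\Omega}\left\vert u(x)\right\vert ^{p(x)}d\sigma$, which is finite thanks to the compact trace embedding $W_{a,b}^{1,p(.)}(\Omega)\hookrightarrow L^{p(.)}(\partial\Omega)$ recorded above. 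Next I would record the elementary scaling estimates: for $\lambda>1$ one has $\lambda^{p^{-}}I_{\beta(x)}(u)\le I_{\beta(x)}(\lambda u)\le\lambda^{p^{+}}I_{\beta(x)}(u)$, and for $0<\lambda<1$ the two inequalities are reversed. Both follow termwise from $t^{p^{-}}\le t^{p(x)}\le t^{p^{+}}$ for $t\ge1$ and its reverse for $t\le1$, applied with $t=\lambda$ inside each integral, together with $\beta(x)\ge\beta^{-}>0$.

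The core step is the analog of Proposition 1(i). For $u\ne0$ the map $\tau\mapsto I_{\beta(x)}(u/\tau)$ is continuous and strictly decreasing on $(0,\infty)$; it tends to $0$ as $\tau\to\infty$ by dominated convergence (with dominating functions $a\left\vert \nabla u\right\vert ^{p(x)}$ on $\Omega$ and $\beta\left\vert u\right\vert ^{p(x)}$ on $\partial\Omega$), and to $+\infty$ as $\tau\to0^{+}$ by monotone convergence, using that $I_{\beta(x)}(u)>0$ since either $\nabla u\ne0$ on a set of positive Lebesgue measure or $u$ reduces to a nonzero constant, whose boundary integral is strictly positive. Hence there is a unique $\tau_{0}$ with $I_{\beta(x)}(u/\tau_{0})=1$, and by definition $\tau_{0}=\left\Vert u\right\Vert _{\beta(x)}$; in particular $\left\Vert u\right\Vert _{\beta(x)}<1$ (resp. $=1$, $>1$) if and only if $I_{\beta(x)}(u)<1$ (resp. $=1$, $>1$).

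Parts (i) and (ii) then follow immediately. Writing $I_{\beta(x)}(u)=I_{\beta(x)}\!\big(\tau_{0}\cdot(u/\tau_{0})\big)$ with $\tau_{0}=\left\Vert u\right\Vert _{\beta(x)}\ge1$ and applying the scaling estimates to the unit-modular vector $u/\tau_{0}$ gives $\left\Vert u\right\Vert _{\beta(x)}^{p^{-}}\le I_{\beta(x)}(u)\le\left\Vert u\right\Vert _{\beta(x)}^{p^{+}}$, which is (i); the case $\tau_{0}=\left\Vert u\right\Vert _{\beta(x)}\le1$ with the reversed inequalities yields (ii) (the trivial case $u=0$ being clear). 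Statement (iii) is obtained by a two-case split: if $\left\Vert u\right\Vert _{\beta(x)}\ge1$ then $p^{-}\le p^{+}$ forces $\min=\left\Vert u\right\Vert _{\beta(x)}^{p^{-}}$ and $\max=\left\Vert u\right\Vert _{\beta(x)}^{p^{+}}$, so (iii) reduces to (i); if $\left\Vert u\right\Vert _{\beta(x)}\le1$ the two exponents swap roles and (iii) reduces to (ii).

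For (iv) I would apply (iii) with $u$ replaced by $u-u_{k}$: if $\left\Vert u-u_{k}\right\Vert _{\beta(x)}\to0$ then eventually $\left\Vert u-u_{k}\right\Vert _{\beta(x)}\le1$, so $I_{\beta(x)}(u-u_{k})\le\left\Vert u-u_{k}\right\Vert _{\beta(x)}^{p^{-}}\to0$; conversely, if $I_{\beta(x)}(u-u_{k})\to0$ then eventually $I_{\beta(x)}(u-u_{k})\le1$, hence $\left\Vert u-u_{k}\right\Vert _{\beta(x)}\le1$ by the threshold equivalence of the core step, so $\left\Vert u-u_{k}\right\Vert _{\beta(x)}^{p^{+}}\le I_{\beta(x)}(u-u_{k})\to0$. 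Statement (v) is the mirror image, applied to $u_{k}$: divergence of the norm (resp. of the modular) forces $\left\Vert u_{k}\right\Vert _{\beta(x)}\ge1$ (resp. $I_{\beta(x)}(u_{k})>1$, hence $\left\Vert u_{k}\right\Vert _{\beta(x)}>1$) eventually, after which the one-sided bounds in (i) give the two-way implication. I expect the only genuinely delicate point to be the behavior of $\tau\mapsto I_{\beta(x)}(u/\tau)$ at the endpoints, that is, justifying the trace bound needed for finiteness and the dominated/monotone convergence arguments; once that is in place, everything else is the same bookkeeping as in Proposition 1.
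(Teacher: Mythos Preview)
Your argument is correct and complete: treating $I_{\beta(x)}$ as the modular associated to the Luxemburg-type norm $\left\Vert \cdot\right\Vert_{\beta(x)}$, establishing the unit-level equivalence via the strict monotonicity and continuity of $\tau\mapsto I_{\beta(x)}(u/\tau)$, and then deriving (i)--(v) from the scaling inequalities is the standard route and each step is justified.

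As for comparison with the paper: the paper does not supply a proof of this proposition at all. It simply records the statement with the attribution ``(see \cite{deng})'' and moves on, so there is no in-paper argument to compare against. Your write-up therefore goes well beyond what the paper provides. The approach you take is exactly the one underlying Proposition~1 for $\varrho_{p(.),a}$ (which the paper likewise states without proof, citing \cite{FanZ}), so in spirit you are filling in what the authors left to the references. The only remark I would add is that your care about the endpoint behaviour of $\tau\mapsto I_{\beta(x)}(u/\tau)$ and about $I_{\beta(x)}(u)>0$ for $u\neq 0$ is more than strictly necessary once the equivalence of $\left\Vert \cdot\right\Vert_{\beta(x)}$ with $\left\Vert \cdot\right\Vert_{a,b}^{1,p(.)}$ (Theorem~3 in the paper) is taken for granted, since that already forces $\left\Vert u\right\Vert_{\beta(x)}>0$ for $u\neq 0$; but spelling it out does no harm.
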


The proof of the following Proposition can be found in \cite[Proposition 2.2]%
{be}.

\begin{proposition}
\label{pro4}Let us define the functional $L_{\beta
(x)}:W_{a,b}^{1,p(.)}\left( \Omega \right) \rightarrow 
\mathbb{R}
$ by%
\begin{equation*}
L_{\beta (x)}\left( u\right) =\dint\limits_{\Omega }\frac{a(x)}{p\left(
x\right) }\left\vert \nabla u\right\vert ^{p\left( x\right)
}dx+\int\limits_{\partial \Omega }\frac{\beta (x)}{p(x)}\left\vert
u(x)\right\vert ^{p(x)}d\sigma
\end{equation*}%
for all $u\in W_{a,b}^{1,p(.)}\left( \Omega \right) $. Then we obtain $%
L_{\beta (x)}\in C^{1}\left( W_{a,b}^{1,p(.)}\left( \Omega \right) ,%
\mathbb{R}
\right) $ and 
\begin{equation*}
L_{\beta (x)}^{\prime }\left( u\right) (v)=<L_{\beta (x)}^{\prime }\left(
u\right) ,v>=\dint\limits_{\Omega }a(x)\left\vert \nabla u\right\vert
^{p\left( x\right) -2}\nabla u\nabla vdx+\int\limits_{\partial \Omega }\beta
(x)\left\vert u(x)\right\vert ^{p(x)-2}uvd\sigma
\end{equation*}%
for any $u,v\in W_{a,b}^{1,p(.)}\left( \Omega \right) $. In addition, we
have the following properties

\begin{enumerate}
\item[\textit{(i)}] $L_{\beta (x)}^{\prime }:W_{a,b}^{1,p(.)}\left( \Omega
\right) \longrightarrow W_{a^{\ast },b^{\ast }}^{-1,p^{\prime }(.)}\left(
\Omega \right) $ is continuous, bounded and strictly monotone operator,

\item[\textit{(ii)}] $L_{\beta (x)}^{\prime }:W_{a,b}^{1,p(.)}\left( \Omega
\right) \longrightarrow W_{a^{\ast },b^{\ast }}^{-1,p^{\prime }(.)}\left(
\Omega \right) $ is a mapping of type $\left( S_{+}\right) ,$ i.e., if $%
u_{n}\rightharpoonup u$ in $W_{a,b}^{1,p(.)}\left( \Omega \right) $ and $%
\underset{n\longrightarrow \infty }{\limsup }L_{\beta (x)}^{\prime }\left(
u_{n}\right) (u_{n}-u)\leq 0$, then $u_{n}\longrightarrow u$ in $%
W_{a,b}^{1,p(.)}\left( \Omega \right) $

\item[\textit{(iii)}] $L_{\beta (x)}^{\prime }:W_{a,b}^{1,p(.)}\left( \Omega
\right) \longrightarrow W_{a^{\ast },b^{\ast }}^{-1,p^{\prime }(.)}\left(
\Omega \right) $ is a homeomorphism.
\end{enumerate}
\end{proposition}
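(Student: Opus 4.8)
The plan is to prove Proposition \ref{pro4} in three stages, following the now-standard scheme for $p(x)$-Laplacian-type operators (as in \cite{fan} and \cite{deng}), adapting it to the double-weighted setting and the boundary term.

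\textbf{Step 1: $C^{1}$-regularity and the formula for $L_{\beta(x)}^{\prime}$.} First I would write $L_{\beta(x)}=L_{1}+L_{2}$, where $L_{1}(u)=\int_{\Omega}\frac{a(x)}{p(x)}|\nabla u|^{p(x)}dx$ and $L_{2}(u)=\int_{\partial\Omega}\frac{\beta(x)}{p(x)}|u|^{p(x)}d\sigma$. For each piece I would compute the G\^ateaux derivative by differentiating under the integral sign: for $t$ small, $\frac{d}{dt}\frac{a(x)}{p(x)}|\nabla(u+tv)|^{p(x)}\big|_{t=0}=a(x)|\nabla u|^{p(x)-2}\nabla u\cdot\nabla v$, and similarly on the boundary, giving exactly the claimed expression for $\langle L_{\beta(x)}^{\prime}(u),v\rangle$. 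The domination needed to justify differentiation under the integral and to show the G\^ateaux derivative is continuous from $W_{a,b}^{1,p(.)}(\Omega)$ into its dual comes from H\"older's inequality in the weighted variable-exponent spaces (the dual pairing with $L_{a^{\ast}}^{p^{\prime}(.)}$), Proposition 1, and — crucially — the compact trace embedding $W_{a,b}^{1,p(.)}(\Omega)\hookrightarrow L_{\beta(\cdot)}^{p(.)}(\partial\Omega)$ supplied by Corollary \ref{cor2} (applied with the weight $\beta$, which is admissible since $\beta\in L^{\infty}(\partial\Omega)$ with $\beta^{-}>0$), which controls the boundary integrals by $\|u\|_{a,b}^{1,p(.)}$. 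Continuity of $u\mapsto L_{\beta(x)}^{\prime}(u)$ then follows from the continuity of the Nemytskii operators $u\mapsto a^{1/p'(x)}|\nabla u|^{p(x)-2}\nabla u$ on $L_{a}^{p(.)}$ and $u\mapsto\beta^{1/p'(x)}|u|^{p(x)-2}u$ on $L_{\beta(\cdot)}^{p(.)}(\partial\Omega)$, together with the compact trace embedding; hence $L_{\beta(x)}\in C^{1}$.

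\textbf{Step 2: boundedness, strict monotonicity, and continuity of $L_{\beta(x)}^{\prime}$.} Boundedness is immediate from Step 1's estimates. For strict monotonicity I would use the elementary vector inequality: for $\xi,\eta\in\mathbb{R}^{N}$ and $t>1$, $(|\xi|^{t-2}\xi-|\eta|^{t-2}\eta)\cdot(\xi-\eta)>0$ whenever $\xi\neq\eta$ (with the familiar sharpened versions for $t\geq2$ and $1<t<2$). Applying this pointwise with $t=p(x)$ to both $\nabla u$ vs.\ $\nabla w$ in $\Omega$ and $u$ vs.\ $w$ on $\partial\Omega$, and integrating against the positive weights $a(x)$ and $\beta(x)$, gives $\langle L_{\beta(x)}^{\prime}(u)-L_{\beta(x)}^{\prime}(w),u-w\rangle>0$ for $u\neq w$ (the inequality is strict because if the two sides of the pairing agreed a.e.\ in $\Omega$ then $\nabla u=\nabla w$ a.e., and the trace term forces $u=w$ on $\partial\Omega$, so $u-w$ is a constant with zero trace, hence $u=w$). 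Continuity of $L_{\beta(x)}^{\prime}$ was already obtained in Step 1.

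\textbf{Step 3: property $(S_{+})$ and the homeomorphism claim.} For $(S_{+})$, suppose $u_{n}\rightharpoonup u$ in $W_{a,b}^{1,p(.)}(\Omega)$ with $\limsup_{n}\langle L_{\beta(x)}^{\prime}(u_{n}),u_{n}-u\rangle\leq0$. By the compact trace embedding (Corollary \ref{cor2}), $u_{n}\to u$ strongly in $L_{\beta(\cdot)}^{p(.)}(\partial\Omega)$, so the boundary part of $\langle L_{\beta(x)}^{\prime}(u_{n}),u_{n}-u\rangle$ tends to $0$; hence $\limsup_{n}\int_{\Omega}a(x)|\nabla u_{n}|^{p(x)-2}\nabla u_{n}\cdot\nabla(u_{n}-u)\,dx\leq0$. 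Writing $\langle L_{\beta(x)}^{\prime}(u_{n})-L_{\beta(x)}^{\prime}(u),u_{n}-u\rangle=\langle L_{\beta(x)}^{\prime}(u_{n}),u_{n}-u\rangle-\langle L_{\beta(x)}^{\prime}(u),u_{n}-u\rangle$ and noting the second term $\to0$ by weak convergence, the monotonicity inequality forces $\langle L_{\beta(x)}^{\prime}(u_{n})-L_{\beta(x)}^{\prime}(u),u_{n}-u\rangle\to0$; combining with the pointwise vector inequalities (a Young/H\"older-type argument as in \cite{fan}) yields $\int_{\Omega}a(x)|\nabla u_{n}-\nabla u|^{p(x)}dx\to0$, i.e.\ $\|\nabla u_{n}-\nabla u\|_{p(.),a}\to0$ by Proposition 1(vi); together with $u_{n}\to u$ in $L_{b}^{p(.)}(\Omega)$ (from compactness of $W_{a,b}^{1,p(.)}(\Omega)\hookrightarrow L_{b}^{q(.)}(\Omega)$ with $q=p$, Proposition \ref{pro2}, or directly from $(S_{+})$ applied on the interior) this gives $u_{n}\to u$ in $W_{a,b}^{1,p(.)}(\Omega)$. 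Finally, for (iii): $L_{\beta(x)}^{\prime}$ is a continuous, bounded, strictly monotone operator, and it is coercive since $\langle L_{\beta(x)}^{\prime}(u),u\rangle=I_{\beta(x)}(u)\geq\|u\|_{\beta(x)}^{p^{-}}$ for $\|u\|_{\beta(x)}\geq1$ by the preceding Proposition, so by the Minty--Browder theorem it is a bijection onto the dual; its inverse is continuous because $L_{\beta(x)}^{\prime}$ maps weakly convergent sequences to their limits via the $(S_{+})$ property — explicitly, if $f_{n}\to f$ in the dual and $u_{n}=(L_{\beta(x)}^{\prime})^{-1}(f_{n})$, then $\{u_{n}\}$ is bounded by coercivity, a subsequence converges weakly to some $u$, $(S_{+})$ upgrades this to strong convergence, and continuity of $L_{\beta(x)}^{\prime}$ identifies $u=(L_{\beta(x)}^{\prime})^{-1}(f)$, so the whole sequence converges. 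I expect the main obstacle to be Step 3: carefully deriving $\int_{\Omega}a(x)|\nabla u_{n}-\nabla u|^{p(x)}dx\to0$ from the vanishing of the monotone pairing in the variable-exponent weighted setting requires splitting $\Omega$ into the regions where $p(x)\geq2$ and $1<p(x)<2$ and applying the appropriate pointwise inequality on each, with the weight $a(x)$ along for the ride — this is routine but is where all the technical care is concentrated.
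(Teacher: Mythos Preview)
The paper does not actually prove Proposition~\ref{pro4}; it simply defers to \cite[Proposition~2.2]{be}. Your proposal supplies exactly the standard argument that one expects to find there --- G\^ateaux differentiation with H\"older/Nemytskii continuity, the pointwise monotonicity inequalities for $\xi\mapsto|\xi|^{t-2}\xi$, the $(S_{+})$ argument via compact trace, and Minty--Browder plus $(S_{+})$ for the homeomorphism --- so there is nothing to compare against in the paper itself, and your route is the canonical one.

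One small point worth tightening in Step~3: to conclude $u_{n}\to u$ in $W_{a,b}^{1,p(.)}(\Omega)$ you invoke Proposition~\ref{pro2} with $q=p$ to get $\|u_{n}-u\|_{p(.),b}\to 0$, but the hypotheses of Proposition~\ref{pro2} are stated for a specific range of exponents and it is not automatic that $q=p$ falls in that range. A cleaner way, already implicit in your argument, is to use the equivalence of $\|\cdot\|_{\beta(x)}$ and $\|\cdot\|_{a,b}^{1,p(.)}$: you have shown $\int_{\Omega}a(x)|\nabla(u_{n}-u)|^{p(x)}dx\to 0$ and, by the compact trace, $\int_{\partial\Omega}\beta(x)|u_{n}-u|^{p(x)}d\sigma\to 0$, hence $I_{\beta(x)}(u_{n}-u)\to 0$, so $\|u_{n}-u\|_{\beta(x)}\to 0$ and the norm equivalence finishes. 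This avoids any side condition on Proposition~\ref{pro2}.
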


\begin{definition}
We call that $u\in W_{a,b}^{1,p(.)}\left( \Omega \right) $ is a weak
solution of the problem (\ref{P}) if%
\begin{equation*}
\dint\limits_{\Omega }a(x)\left\vert \nabla u\right\vert ^{p\left( x\right)
-2}\nabla u\nabla vdx+\int\limits_{\partial \Omega }\beta (x)\left\vert
u(x)\right\vert ^{p(x)-2}uvd\sigma -\lambda \tint\limits_{\Omega
}b(x)\left\vert u\right\vert ^{q(x)-2}uvdx=0
\end{equation*}%
for all $v\in W_{a,b}^{1,p(.)}\left( \Omega \right) .$ We point out that if $%
\lambda \in 
\mathbb{R}
$ is an eigenvalue of the problem (\ref{P}), then the corresponding $u\in
W_{a,b}^{1,p(.)}\left( \Omega \right) -\left\{ 0\right\} $ is a weak
solution of (\ref{P}).
\end{definition}

To find out a weak solution to (\ref{P}), let us introduce the functional $%
J_{\lambda }:W_{a,b}^{1,p(.)}\left( \Omega \right) \rightarrow 
\mathbb{R}
$ defined by%
\begin{equation*}
J_{\lambda }\left( u\right) =\dint\limits_{\Omega }\frac{a(x)}{p\left(
x\right) }\left\vert \nabla u\right\vert ^{p\left( x\right)
}dx+\int\limits_{\partial \Omega }\frac{\beta (x)}{p(x)}\left\vert
u(x)\right\vert ^{p(x)}d\sigma -\lambda \dint\limits_{\Omega }\frac{b\left(
x\right) }{q\left( x\right) }\left\vert u\right\vert ^{q\left( x\right) }dx
\end{equation*}%
for any $\lambda >0.$ It is easy to see that $J_{\lambda }$ is well defined
from $W_{a,b}^{1,p(.)}\left( \Omega \right) $ to $%
\mathbb{R}
$ and $J_{\lambda }\in C^{1}\left( W_{a,b}^{1,p(.)}\left( \Omega \right) ,%
\mathbb{R}
\right) $ with the derivative given by%
\begin{equation*}
J_{\lambda }^{\prime }\left( u\right) (v)=\dint\limits_{\Omega
}a(x)\left\vert \nabla u\right\vert ^{p\left( x\right) -2}\nabla u\nabla
vdx+\int\limits_{\partial \Omega }\beta (x)\left\vert u(x)\right\vert
^{p(x)-2}uvd\sigma -\lambda \dint\limits_{\Omega }b(x)\left\vert
u\right\vert ^{q(x)-2}uvdx
\end{equation*}%
for all $u,v\in W_{a,b}^{1,p(.)}\left( \Omega \right) .$ The operator $%
J_{\lambda }^{\prime }\left( u\right) $ belongs to the dual space $%
W_{a^{\ast },b^{\ast }}^{-1,p^{\prime }(.)}\left( \Omega \right) $. It is
also note that $u\in W_{a,b}^{1,p(.)}\left( \Omega \right) $ is a weak
solution of the problem (\ref{P}) if and only if $u$ is a critical point of $%
J_{\lambda }$.

\section{Main results}

Throughout this paper we assume that all assumptions in Proposition \ref%
{pro2} and Corollary \ref{cor2} are valid.

\begin{lemma}
\label{PScon}Let $p^{+}<\eta <q^{-}$. Then the operator $J_{\lambda }$
satisfies the (PS) condition.
\end{lemma}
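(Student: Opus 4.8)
The plan is to take an arbitrary (PS) sequence $(u_{n})\subset W_{a,b}^{1,p(.)}\left( \Omega\right)$, that is, $J_{\lambda}(u_{n})\rightarrow c$ for some $c\in\mathbb{R}$ and $J_{\lambda}^{\prime}(u_{n})\rightarrow 0$ in $W_{a^{\ast},b^{\ast}}^{-1,p^{\prime}(.)}\left( \Omega\right)$, and to extract a strongly convergent subsequence. The argument has the three standard stages: (1) boundedness of $(u_{n})$ exploiting the gap $p^{+}<\eta<q^{-}$; (2) passing to a weak limit and upgrading the lower order terms to strong convergence via the compact embeddings; (3) invoking the $(S_{+})$ property of $L_{\beta(x)}^{\prime}$.

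For Step 1, I would use the algebraic identity
\[
\eta J_{\lambda}(u_{n})-\langle J_{\lambda}^{\prime}(u_{n}),u_{n}\rangle=\int_{\Omega}a(x)\Bigl(\tfrac{\eta}{p(x)}-1\Bigr)\left\vert \nabla u_{n}\right\vert ^{p(x)}dx+\int_{\partial\Omega}\beta(x)\Bigl(\tfrac{\eta}{p(x)}-1\Bigr)\left\vert u_{n}\right\vert ^{p(x)}d\sigma+\lambda\int_{\Omega}b(x)\Bigl(1-\tfrac{\eta}{q(x)}\Bigr)\left\vert u_{n}\right\vert ^{q(x)}dx.
\]
Because $p(x)\leq p^{+}<\eta$ we have $\tfrac{\eta}{p(x)}-1\geq\tfrac{\eta}{p^{+}}-1>0$, and because $q(x)\geq q^{-}>\eta$ we have $1-\tfrac{\eta}{q(x)}\geq 1-\tfrac{\eta}{q^{-}}>0$; hence all three integrals are nonnegative and the first two dominate $\bigl(\tfrac{\eta}{p^{+}}-1\bigr)I_{\beta(x)}(u_{n})$. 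For $n$ large the left-hand side is at most $\eta(\left\vert c\right\vert+1)+\varepsilon_{n}\left\Vert u_{n}\right\Vert _{\beta(x)}$ with $\varepsilon_{n}\rightarrow 0$ (here I use the equivalence of $\left\Vert \cdot\right\Vert _{\beta(x)}$ with $\left\Vert \cdot\right\Vert _{a,b}^{1,p(.)}$); since $I_{\beta(x)}(u_{n})\geq\left\Vert u_{n}\right\Vert _{\beta(x)}^{p^{-}}$ whenever $\left\Vert u_{n}\right\Vert _{\beta(x)}\geq 1$, dividing by $\left\Vert u_{n}\right\Vert _{\beta(x)}^{p^{-}}$ and using $p^{-}>1$ prevents $\left\Vert u_{n}\right\Vert _{\beta(x)}$ from blowing up. Thus $(u_{n})$ is bounded in $W_{a,b}^{1,p(.)}\left( \Omega\right)$.

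For Step 2, reflexivity of $W_{a,b}^{1,p(.)}\left( \Omega\right)$ gives a subsequence with $u_{n}\rightharpoonup u$; by Proposition \ref{pro2} (the compact embedding $W_{a,b}^{1,p(.)}\left( \Omega\right)\hookrightarrow L_{b}^{q(.)}(\Omega)$) and Corollary \ref{cor2} (the compact embedding $W_{a,b}^{1,p(.)}\left( \Omega\right)\hookrightarrow L_{a}^{p(.)}(\partial\Omega)$) we may assume $u_{n}\rightarrow u$ in $L_{b}^{q(.)}(\Omega)$ and in $L_{a}^{p(.)}(\partial\Omega)$. Using $(q(x)-1)q^{\prime}(x)=q(x)$ and $b^{1-q^{\prime}(x)}b^{q^{\prime}(x)}=b(x)$ one checks that $\varrho_{q^{\prime}(.),b^{\ast}}\bigl(b\left\vert u_{n}\right\vert ^{q(.)-2}u_{n}\bigr)=\varrho_{q(.),b}(u_{n})$, which is bounded, so $b\left\vert u_{n}\right\vert ^{q(.)-2}u_{n}$ is bounded in the weighted dual $L_{b^{\ast}}^{q^{\prime}(.)}(\Omega)$; the generalized Hölder inequality then yields $\int_{\Omega}b(x)\left\vert u_{n}\right\vert ^{q(x)-2}u_{n}(u_{n}-u)\,dx\rightarrow 0$.

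For Step 3, $J_{\lambda}^{\prime}(u_{n})\rightarrow 0$ together with boundedness of $(u_{n}-u)$ gives $\langle J_{\lambda}^{\prime}(u_{n}),u_{n}-u\rangle\rightarrow 0$, so from $\langle L_{\beta(x)}^{\prime}(u_{n}),u_{n}-u\rangle=\langle J_{\lambda}^{\prime}(u_{n}),u_{n}-u\rangle+\lambda\int_{\Omega}b(x)\left\vert u_{n}\right\vert ^{q(x)-2}u_{n}(u_{n}-u)\,dx$ and Step 2 we get $\langle L_{\beta(x)}^{\prime}(u_{n}),u_{n}-u\rangle\rightarrow 0$, in particular $\limsup_{n}\langle L_{\beta(x)}^{\prime}(u_{n}),u_{n}-u\rangle\leq 0$. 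Since $L_{\beta(x)}^{\prime}$ is of type $(S_{+})$ by Proposition \ref{pro4}(ii), we conclude $u_{n}\rightarrow u$ in $W_{a,b}^{1,p(.)}\left( \Omega\right)$, which is the (PS) condition. I expect Step 1 to be the main obstacle: the crux is that $p^{+}<\eta<q^{-}$ forces every coefficient $\tfrac{\eta}{p(x)}-1$ and $1-\tfrac{\eta}{q(x)}$ to be strictly positive, which is precisely what lets one control $I_{\beta(x)}(u_{n})$; once boundedness is in hand, the compact embeddings and the $(S_{+})$ property make the rest routine bookkeeping.
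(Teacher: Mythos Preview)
Your proof is correct and follows essentially the same route as the paper's: the paper computes $J_{\lambda}(u_{n})-\tfrac{1}{\eta}\langle J_{\lambda}^{\prime}(u_{n}),u_{n}\rangle$ (your identity divided by $\eta$) to get boundedness, then uses the compact embedding into $L_{b}^{q(.)}(\Omega)$ and H\"older to kill the lower-order term, and finishes with the $(S_{+})$ property of $L_{\beta(x)}^{\prime}$. Your invocation of the boundary compact embedding from Corollary~\ref{cor2} is harmless but unnecessary here, since the boundary term is already absorbed into $L_{\beta(x)}^{\prime}$ and handled by $(S_{+})$.
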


\begin{proof}
Let $\left( u_{n}\right) _{n\in 
\mathbb{N}
}\subset W_{a,b}^{1,p(.)}\left( \Omega \right) $ be such that 
\begin{equation}
\sup \left\vert J_{\lambda }\left( u_{n}\right) \right\vert \leq M,\text{ \
\ }J_{\lambda }^{\prime }\left( u_{n}\right) \rightarrow 0\text{ in }%
W_{a^{\ast },b^{\ast }}^{-1,p^{\prime }(.)}\left( \Omega \right) \text{ as }%
n\rightarrow \infty ,  \label{P4}
\end{equation}%
i.e., $\left( u_{n}\right) _{n\in 
\mathbb{N}
}$ is a (PS) sequence. First, we will prove that $\left( u_{n}\right) _{n\in 
\mathbb{N}
}$ is bounded in $W_{a,b}^{1,p(.)}\left( \Omega \right) .$ For this, we
assume that $\left( u_{n}\right) _{n\in 
\mathbb{N}
}$ is not bounded. Thus, we can suppose that $\left\Vert u_{n}\right\Vert
_{\beta (x)}>1$ for all $n\in 
\mathbb{N}
.$ Therefore, we have%
\begin{eqnarray*}
&&M+\left\Vert u_{n}\right\Vert _{\beta (x)} \\
&\geq &J_{\lambda }\left( u_{n}\right) -\frac{1}{\eta }\left\langle
J_{\lambda }^{\prime }\left( u_{n}\right) ,u_{n}\right\rangle \\
&=&\dint\limits_{\Omega }\frac{a(x)}{p\left( x\right) }\left\vert \nabla
u_{n}\right\vert ^{p\left( x\right) }dx+\int\limits_{\partial \Omega }\frac{%
\beta (x)}{p(x)}\left\vert u_{n}(x)\right\vert ^{p(x)}d\sigma -\lambda
\dint\limits_{\Omega }\frac{b\left( x\right) }{q\left( x\right) }\left\vert
u_{n}\right\vert ^{q\left( x\right) }dx \\
&&-\frac{1}{\eta }\dint\limits_{\Omega }a(x)\left\vert \nabla
u_{n}\right\vert ^{p\left( x\right) }dx-\frac{1}{\eta }\int\limits_{\partial
\Omega }\beta (x)\left\vert u_{n}(x)\right\vert ^{p(x)}d\sigma +\frac{%
\lambda }{\eta }\dint\limits_{\Omega }b(x)\left\vert u_{n}\right\vert
^{q(x)}dx \\
&\geq &\left( \frac{1}{p^{+}}-\frac{1}{\eta }\right) I_{\beta
(x)}(u_{n})+\lambda \left( \frac{1}{\eta }-\frac{1}{q^{-}}\right)
\dint\limits_{\Omega }b(x)\left\vert u_{n}\right\vert ^{q(x)}dx \\
&\geq &\left( \frac{1}{p^{+}}-\frac{1}{\eta }\right) \left\Vert
u_{n}\right\Vert _{\beta (x)}^{p^{-}}.
\end{eqnarray*}%
Thus, we get $\eta \leq p^{+}$, which is a contradiction. This means that $%
\left( u_{n}\right) $ is bounded in $W_{a,b}^{1,p(.)}\left( \Omega \right) .$
Since $W_{a,b}^{1,p(.)}\left( \Omega \right) $ is a reflexive Banach space,
there exists $u\in W_{a,b}^{1,p(.)}\left( \Omega \right) $ such that $%
u_{n}\rightharpoonup u$ in $W_{a,b}^{1,p(.)}\left( \Omega \right) .$ By
Proposition \ref{pro2}, the embedding $W_{a,b}^{1,p(.)}\left( \Omega \right)
\hookrightarrow L_{b}^{q\left( .\right) }\left( \Omega \right) $ is compact.
Thus, we have $u_{n}\rightarrow u$ in $L_{b}^{q\left( .\right) }\left(
\Omega \right) $ and $\underset{n}{\sup }\left\Vert u_{n}\right\Vert _{\beta
(x)}<\infty $. Moreover, by the H\"{o}lder inequality, we obtain 
\begin{eqnarray*}
&&\left\vert \dint\limits_{\Omega }b(x)\left\vert u_{n}\right\vert
^{q(x)-2}u_{n}\left( u_{n}-u\right) dx\right\vert \\
&\leq &\dint\limits_{\Omega }b(x)^{\frac{1}{q(x)}+\frac{1}{r(x)}}\left\vert
u_{n}\right\vert ^{q(x)-1}\left\vert u_{n}-u\right\vert dx \\
&\leq &C\left\Vert \left\vert u_{n}\right\vert ^{q(x)-1}b(x)^{\frac{1}{r(x)}%
}\right\Vert _{r(.)}.\left\Vert \left\vert u_{n}-u\right\vert b(x)^{\frac{1}{%
q(x)}}\right\Vert _{q(.)} \\
&=&C\left\Vert u_{n}\right\Vert _{q(.),b}.\left\Vert u_{n}-u\right\Vert
_{q(.),b} \\
&\leq &C^{\ast }\left\Vert u_{n}\right\Vert _{\beta (x)}.\left\Vert
u_{n}-u\right\Vert _{q(.),b}\longrightarrow 0
\end{eqnarray*}%
where $\frac{1}{q\left( .\right) }+\frac{1}{r\left( .\right) }=1.$ This
follows that 
\begin{equation}
\lim_{n\longrightarrow \infty }\dint\limits_{\Omega }b(x)\left\vert
u_{n}\right\vert ^{q(x)-2}u_{n}\left( u_{n}-u\right) dx=0.  \label{P5}
\end{equation}%
By (\ref{P4}) and the boundedness of $\left\{ u_{n}-u\right\} $ in $%
W_{a,b}^{1,p(.)}\left( \Omega \right) $, we have 
\begin{equation}
J_{\lambda }^{\prime }\left( u_{n}\right) \left( u_{n}-u\right) \rightarrow
0.  \label{P6}
\end{equation}%
If we consider (\ref{P5}) and (\ref{P6}), then we get 
\begin{equation*}
\lim_{n\longrightarrow \infty }\left( \dint\limits_{\Omega }a(x)\left\vert
\nabla u_{n}\right\vert ^{p\left( x\right) -2}\nabla u_{n}\left( \nabla
u_{n}-\nabla u\right) dx+\int\limits_{\partial \Omega }\beta (x)\left\vert
u_{n}(x)\right\vert ^{p(x)-2}u_{n}\left( u_{n}-u\right) d\sigma \right) =0
\end{equation*}%
or equivalently 
\begin{equation}
\lim_{n\longrightarrow \infty }L_{\beta (x)}^{\prime }\left( u_{n}\right)
(u_{n}-u)=0.  \label{P7}
\end{equation}%
This follows by (\ref{P7}) and Proposition \ref{pro4} that the sequence $%
\left\{ u_{n}\right\} $ converges strongly to $u$ in $W_{a,b}^{1,p(.)}\left(
\Omega \right) $. This completes the proof.
\end{proof}

\begin{theorem}
\label{teo5}Let $p^{+}<\eta <q^{-}$. Then, the problem (\ref{P}) has a
nontrivial weak solution.
\end{theorem}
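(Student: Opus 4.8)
The plan is to obtain the solution as a mountain-pass critical point of the energy functional $J_{\lambda}$. Since $J_{\lambda}\in C^{1}\bigl(W_{a,b}^{1,p(.)}(\Omega),\mathbb{R}\bigr)$, its critical points are precisely the weak solutions of (\ref{P}), and Lemma \ref{PScon} already gives the Palais--Smale condition (here we use $p^{+}<\eta<q^{-}$). Because $J_{\lambda}(0)=0$, it remains only to verify the two geometric hypotheses of the Mountain Pass Lemma: that the origin is a strict local minimum of $J_{\lambda}$ on a small sphere, and that $J_{\lambda}$ takes a negative value at some point far from the origin.

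For the first geometric condition I would work on the sphere $\left\Vert u\right\Vert _{\beta (x)}=\rho$ with $\rho\in(0,1)$. By the proposition on $I_{\beta(x)}$ from \cite{deng}, the first two terms of $J_{\lambda}$ are bounded below by $\frac{1}{p^{+}}I_{\beta(x)}(u)\ge\frac{1}{p^{+}}\left\Vert u\right\Vert_{\beta(x)}^{p^{+}}$. For the remaining term, the compact (hence continuous) embedding $W_{a,b}^{1,p(.)}(\Omega)\hookrightarrow L_{b}^{q(.)}(\Omega)$ of Proposition \ref{pro2} gives a constant $C>0$ with $\left\Vert u\right\Vert_{q(.),b}\le C\left\Vert u\right\Vert_{\beta(x)}$; shrinking $\rho$ so that $C\rho<1$ and invoking the modular estimates of \cite{FanZ} together with $q^{-}\le q(x)$ yields $\lambda\int_{\Omega}\frac{b(x)}{q(x)}\left\vert u\right\vert^{q(x)}dx\le\frac{\lambda}{q^{-}}\varrho_{q(.),b}(u)\le\frac{\lambda C^{q^{-}}}{q^{-}}\left\Vert u\right\Vert_{\beta(x)}^{q^{-}}$. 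Hence $J_{\lambda}(u)\ge\left\Vert u\right\Vert_{\beta(x)}^{p^{+}}\bigl(\tfrac{1}{p^{+}}-\tfrac{\lambda C^{q^{-}}}{q^{-}}\left\Vert u\right\Vert_{\beta(x)}^{q^{-}-p^{+}}\bigr)$, and since $q^{-}>p^{+}$ the bracket is positive once $\rho$ is small; fixing such a $\rho$ and putting $\alpha:=\rho^{p^{+}}\bigl(\tfrac{1}{p^{+}}-\tfrac{\lambda C^{q^{-}}}{q^{-}}\rho^{q^{-}-p^{+}}\bigr)>0$ gives $J_{\lambda}(u)\ge\alpha$ for all $u$ with $\left\Vert u\right\Vert_{\beta(x)}=\rho$.

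For the second geometric condition, choose any $w\in W_{a,b}^{1,p(.)}(\Omega)$ with $\int_{\Omega}b(x)\left\vert w\right\vert^{q(x)}dx>0$ (possible since $b>0$ a.e.\ on $\Omega$) and estimate $J_{\lambda}(tw)$ for $t>1$. Using $t^{p(x)}\le t^{p^{+}}$, $t^{q(x)}\ge t^{q^{-}}$, $p^{-}\le p(x)$ and $q(x)\le q^{+}$ one obtains $J_{\lambda}(tw)\le\frac{t^{p^{+}}}{p^{-}}I_{\beta(x)}(w)-\frac{\lambda t^{q^{-}}}{q^{+}}\int_{\Omega}b(x)\left\vert w\right\vert^{q(x)}dx$, which tends to $-\infty$ as $t\to\infty$ because $q^{-}>p^{+}$. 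Pick $t_{0}>1$ so large that $e:=t_{0}w$ satisfies $\left\Vert e\right\Vert_{\beta(x)}>\rho$ and $J_{\lambda}(e)<0$. Now the Mountain Pass Lemma applies: with $\Gamma=\{\gamma\in C([0,1],W_{a,b}^{1,p(.)}(\Omega)):\gamma(0)=0,\ \gamma(1)=e\}$, the level $c:=\inf_{\gamma\in\Gamma}\max_{s\in[0,1]}J_{\lambda}(\gamma(s))$ satisfies $c\ge\alpha>0$ and is a critical value of $J_{\lambda}$. Any corresponding critical point $u$ then has $J_{\lambda}(u)=c>0=J_{\lambda}(0)$, so $u\neq0$, and $u$ is the desired nontrivial weak solution of (\ref{P}). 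The only point requiring care is the bookkeeping of the variable-exponent modular inequalities in the local-minimum step, in particular keeping both $\left\Vert u\right\Vert_{\beta(x)}$ and $\left\Vert u\right\Vert_{q(.),b}$ below $1$ before applying those estimates; the rest is routine.
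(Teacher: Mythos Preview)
Your proof is correct and follows the same Mountain Pass strategy as the paper: Lemma~\ref{PScon} for the Palais--Smale condition, a lower estimate of $J_{\lambda}$ on a small sphere, and $J_{\lambda}(tw)\to-\infty$ along a ray. The one notable difference is in the local-minimum step. The paper bounds $J_{\lambda}(u)\ge\bigl(\tfrac{1}{p^{+}}-\tfrac{\lambda}{q^{-}}\bigr)\Vert u\Vert_{\beta(x)}^{p^{+}}$ on $\Vert u\Vert_{\beta(x)}\le1$ and then \emph{imposes} the extra smallness condition $p^{+}\lambda<q^{-}$ so that the coefficient is positive; it is also somewhat loose with the embedding constant. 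Your factorization $J_{\lambda}(u)\ge\Vert u\Vert_{\beta(x)}^{p^{+}}\bigl(\tfrac{1}{p^{+}}-\tfrac{\lambda C^{q^{-}}}{q^{-}}\Vert u\Vert_{\beta(x)}^{q^{-}-p^{+}}\bigr)$ exploits $q^{-}>p^{+}$ directly, so the bracket becomes positive for $\rho$ small regardless of the size of $\lambda>0$. Thus your argument actually delivers the theorem for every $\lambda>0$, whereas the paper's written proof covers only the range $\lambda<q^{-}/p^{+}$; otherwise the two proofs coincide.
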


\begin{proof}
By Lemma \ref{PScon}, the functional $J_{\lambda }$ satisfies (PS) condition
on $W_{a,b}^{1,p(.)}\left( \Omega \right) $. Since $p^{+}<\eta <q^{-}$ and $%
W_{a,b}^{1,p(.)}\left( \Omega \right) \hookrightarrow L_{b}^{q\left(
.\right) }\left( \Omega \right) ,$ we have%
\begin{eqnarray*}
J_{\lambda }\left( u\right) &\geq &\frac{1}{p^{+}}\left(
\dint\limits_{\Omega }a(x)\left\vert \nabla u\right\vert ^{p\left( x\right)
}dx+\int\limits_{\partial \Omega }\beta (x)\left\vert u(x)\right\vert
^{p(x)}d\sigma \right) -\frac{\lambda }{q^{-}}\dint\limits_{\Omega
}b(x)\left\vert u\right\vert ^{q\left( x\right) }dx \\
&\geq &\frac{1}{p^{+}}I_{\beta (x)}(u)-\frac{\lambda }{q^{-}}\max \left\{
\left\Vert u\right\Vert _{q(.),b}^{q^{-}},\left\Vert u\right\Vert
_{q(.),b}^{q^{+}}\right\} \\
&\geq &\frac{1}{p^{+}}I_{\beta (x)}(u)-\frac{\lambda }{q^{-}}\max \left\{
\left\Vert u\right\Vert _{\beta (x)}^{q^{-}},\left\Vert u\right\Vert _{\beta
(x)}^{q^{+}}\right\} \\
&\geq &\frac{1}{p^{+}}\left\Vert u\right\Vert _{\beta (x)}^{p^{+}}-\frac{%
\lambda }{q^{-}}\left\Vert u\right\Vert _{\beta (x)}^{q^{-}} \\
&\geq &\left( \frac{1}{p^{+}}-\frac{\lambda }{q^{-}}\right) \left\Vert
u\right\Vert _{\beta (x)}^{p^{+}}
\end{eqnarray*}%
for $\left\Vert u\right\Vert _{\beta (x)}\leq 1$ and $p^{+}\lambda <q^{-}.$
Thus, when $\left\Vert u\right\Vert _{\beta (x)}=\rho $ sufficiently small,
we have $J_{\lambda }\left( u\right) >0.$ Moreover, since $p^{+}<\eta <q^{-}$%
, we get%
\begin{eqnarray*}
J_{\lambda }\left( tv\right) &\leq &t^{p^{+}}I_{\beta (x)}(v)-\lambda
t^{q^{-}}\dint\limits_{\Omega }b(x)\left\vert v\right\vert ^{q\left(
x\right) }dx \\
&\leq &t^{p^{+}}I_{\beta (x)}(v)-\lambda t^{\eta ^{-}}\dint\limits_{\Omega
}b(x)\left\vert v\right\vert ^{q\left( x\right) }dx\longrightarrow -\infty
\end{eqnarray*}%
as $t\longrightarrow \infty $ for $v\in W_{a,b}^{1,p(.)}\left( \Omega
\right) -\left\{ 0\right\} .$ It is note that $J_{\lambda }\left( 0\right)
=0.$ This follows that $J_{\lambda }$ satisfies the geometric conditions of
the Mountain Pass Theorem (see \cite{Wil}), and the operator $J_{\lambda }$
admits at least one nontrivial critical point.
\end{proof}

To apply Ekeland's variational principle to the problem (\ref{P}), we need
two following lemmas.

\begin{lemma}
\label{lem2}Let $1<q^{-}<p^{-}<q^{+}<p^{+}$. Then, there exists $\lambda
^{\ast }>0$ such that for any $\lambda \in \left( 0,\lambda ^{\ast }\right) $
there exist $\rho ,\gamma >0$ such that $J_{\lambda }(u)\geq \gamma >0$ for
any $u\in W_{a,b}^{1,p(.)}\left( \Omega \right) $ with $\left\Vert
u\right\Vert _{\beta (x)}=\rho .$
\end{lemma}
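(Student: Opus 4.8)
The statement is a standard "local minimum / mountain-pass geometry near the origin" estimate, so the strategy is to bound $J_\lambda(u)$ from below on the sphere $\{\|u\|_{\beta(x)}=\rho\}$ for $\rho<1$ small, using the Proposition on the modular $I_{\beta(x)}$ together with the compact embedding $W^{1,p(.)}_{a,b}(\Omega)\hookrightarrow L^{q(.)}_{b}(\Omega)$ from Proposition~\ref{pro2}. First I would take $u$ with $\|u\|_{\beta(x)}=\rho\le 1$. From the definition of $J_\lambda$ and the pointwise bounds $p^-\le p(x)\le p^+$, $q^-\le q(x)\le q^+$,
\begin{equation*}
J_\lambda(u)\ge \frac{1}{p^+}I_{\beta(x)}(u)-\frac{\lambda}{q^-}\int_\Omega b(x)|u|^{q(x)}dx
\ge \frac{1}{p^+}\|u\|_{\beta(x)}^{p^+}-\frac{\lambda}{q^-}\int_\Omega b(x)|u|^{q(x)}dx,
\end{equation*}
where I used Proposition on $I_{\beta(x)}$ part (ii) (since $\|u\|_{\beta(x)}\le 1$). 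So the whole game is to control $\int_\Omega b(x)|u|^{q(x)}dx=\varrho_{q(.),b}(u)$ from above by a power of $\|u\|_{\beta(x)}$ strictly larger than $p^+$.

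The key step: by the compact (hence continuous) embedding $W^{1,p(.)}_{a,b}(\Omega)\hookrightarrow L^{q(.)}_{b}(\Omega)$ there is $C>0$ with $\|u\|_{q(.),b}\le C\|u\|_{a,b}^{1,p(.)}$, and since $\|.\|_{\beta(x)}$ is equivalent to $\|.\|_{a,b}^{1,p(.)}$ (the earlier Theorem), $\|u\|_{q(.),b}\le C'\|u\|_{\beta(x)}$. For $\rho$ small enough we may assume $\|u\|_{q(.),b}<1$, so by Proposition~1(iii) (or (iv)) on $L^{q(.)}_{b}$, $\varrho_{q(.),b}(u)\le \|u\|_{q(.),b}^{q^-}\le (C')^{q^-}\|u\|_{\beta(x)}^{q^-}$. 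Plugging this in,
\begin{equation*}
J_\lambda(u)\ge \frac{1}{p^+}\rho^{p^+}-\frac{\lambda (C')^{q^-}}{q^-}\rho^{q^-}
=\rho^{p^+}\left(\frac{1}{p^+}-\frac{\lambda (C')^{q^-}}{q^-}\rho^{q^--p^+}\right).
\end{equation*}
Here I use the hypothesis $q^-<p^-\le p^+$, equivalently $q^- - p^+ <0$, which is exactly what makes the subtracted term dominate for small $\rho$ unless $\lambda$ is small; conversely for fixed small $\rho$ the bracket is positive for $\lambda$ below a threshold.

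Finally I would make the choices explicit. Fix $\rho\in(0,1)$ small enough that $\|u\|_{q(.),b}<1$ whenever $\|u\|_{\beta(x)}=\rho$ (possible since $\|u\|_{q(.),b}\le C'\rho$). Then set
\begin{equation*}
\lambda^\ast=\frac{q^-}{2 p^+ (C')^{q^-}}\,\rho^{\,p^+-q^-}>0,
\end{equation*}
so that for every $\lambda\in(0,\lambda^\ast)$ the bracket above is at least $\frac{1}{2p^+}$, giving $J_\lambda(u)\ge \gamma:=\frac{1}{2p^+}\rho^{p^+}>0$ on $\|u\|_{\beta(x)}=\rho$. (One can equivalently first fix $\lambda^\ast$ via an arbitrary admissible $\rho_0$ and then, for each $\lambda<\lambda^\ast$, choose the corresponding $\rho$; either bookkeeping works.) The only mildly delicate point is making sure the embedding/modular inequalities are applied on the correct side of $1$ — that is why one must shrink $\rho$ first so that $\|u\|_{q(.),b}<1$, and then read off the constants; everything else is the routine chain of inequalities above.
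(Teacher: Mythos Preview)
Your proof is correct and follows essentially the same route as the paper's: use the continuous embedding $W_{a,b}^{1,p(.)}(\Omega)\hookrightarrow L_b^{q(.)}(\Omega)$ to force $\|u\|_{q(.),b}<1$ for $\rho$ small, bound the modular by $\|u\|_{q(.),b}^{q^-}\le (C')^{q^-}\rho^{q^-}$, and combine with $I_{\beta(x)}(u)\ge \rho^{p^+}$ to obtain the same threshold $\lambda^\ast=\dfrac{q^-}{2p^+(C')^{q^-}}\rho^{p^+-q^-}$ and $\gamma=\dfrac{\rho^{p^+}}{2p^+}$. The only cosmetic difference is that the paper factors out $\rho^{q^-}$ instead of $\rho^{p^+}$ in the final display; the constants and logic are identical.
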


\begin{proof}
By $W_{a,b}^{1,p(.)}\left( \Omega \right) \hookrightarrow L_{b}^{q\left(
.\right) }\left( \Omega \right) $, there exists a constant $C_{2}>0$ such
that%
\begin{equation}
\left\Vert u\right\Vert _{q(.),b}\leq C_{2}\left\Vert u\right\Vert _{\beta
(x)}  \label{P8}
\end{equation}%
for any $u\in W_{a,b}^{1,p(.)}\left( \Omega \right) $. We take $\rho \in
\left( 0,1\right) $ such that $\rho <\frac{1}{C_{2}}$. Then (\ref{P8})
implies $\left\Vert u\right\Vert _{q(.),b}<1$ and 
\begin{equation}
\dint\limits_{\Omega }\left\vert u\right\vert ^{q\left( x\right) }b\left(
x\right) dx\leq \left\Vert u\right\Vert _{q(.),b}^{q^{-}}  \label{P9}
\end{equation}%
for all $u\in W_{a,b}^{1,p(.)}\left( \Omega \right) $ with $\left\Vert
u\right\Vert _{\beta (x)}=\rho .$ Combining (\ref{P8}) and (\ref{P9}), we
obtain%
\begin{equation}
\dint\limits_{\Omega }\left\vert u\right\vert ^{q\left( x\right) }b\left(
x\right) dx\leq C_{2}^{q^{-}}\left\Vert u\right\Vert _{\beta (x)}^{q^{-}}
\label{P10}
\end{equation}%
for all $u\in W_{a,b}^{1,p(.)}\left( \Omega \right) $ with $\left\Vert
u\right\Vert _{\beta (x)}=\rho .$ By (\ref{P10}), we deduce that 
\begin{eqnarray*}
J_{\lambda }\left( u\right) &\geq &\frac{1}{p^{+}}I_{\beta (x)}(u)-\frac{%
\lambda }{q^{-}}\dint\limits_{\Omega }b(x)\left\vert u\right\vert ^{q\left(
x\right) }dx \\
&\geq &\frac{1}{p^{+}}\left\Vert u\right\Vert _{\beta (x)}^{p^{+}}-\frac{%
\lambda }{q^{-}}C_{2}^{q^{-}}\left\Vert u\right\Vert _{\beta (x)}^{q^{-}} \\
&\geq &\frac{1}{p^{+}}\left\Vert u\right\Vert _{\beta (x)}^{p^{+}}-\frac{%
\lambda }{q^{-}}C_{2}^{q^{-}}\left\Vert u\right\Vert _{\beta (x)}^{q^{-}} \\
&=&\rho ^{q^{-}}\left( \frac{1}{p^{+}}\rho ^{p^{+}-q^{-}}-\frac{\lambda }{%
q^{-}}C_{2}^{q^{-}}\right) .
\end{eqnarray*}%
Set%
\begin{equation}
\lambda ^{\ast }=\frac{\rho ^{p^{+}-q^{-}}}{2p^{+}}.\frac{q^{-}}{%
C_{2}^{q^{-}}}.  \label{P11}
\end{equation}%
Then there exists $\gamma =\frac{\rho ^{p^{+}}}{2p^{+}}>0$ such that $%
J_{\lambda }(u)\geq \gamma >0$ for any $\lambda \in \left( 0,\lambda ^{\ast
}\right) $ and $u\in W_{a,b}^{1,p(.)}\left( \Omega \right) $ with $%
\left\Vert u\right\Vert _{\beta (x)}=\rho .$ This completes the proof.
\end{proof}

\begin{lemma}
\label{lem3}There is a function $\phi $ in $W_{a,b}^{1,p(.)}\left( \Omega
\right) $ satisfying $\phi \geq 0$, $\phi \neq 0$ and $J_{\lambda }\left(
t\phi \right) <0$ when $t$ is small enough for $1<q^{-}<p^{-}<q^{+}<p^{+}$.
\end{lemma}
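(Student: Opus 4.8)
The plan is to exploit the single strict inequality $q^{-}<p^{-}$ hidden in the hypothesis: the negative term of $J_{\lambda}$ carries the exponents $q(x)$ while the two positive terms carry $p(x)$, so for small $t$ the sign of $J_{\lambda}(t\phi)$ is governed by the lowest available power of $t$, and that lowest power comes from the part of $\Omega$ where $q$ is closest to $q^{-}$. Accordingly, I would first fix $\epsilon\in\left(0,p^{-}-q^{-}\right)$ (possible precisely because $q^{-}<p^{-}$) and consider the set $\Omega_{\epsilon}=\left\{x\in\Omega:q(x)<q^{-}+\epsilon\right\}$. By continuity of $q$ and the definition $q^{-}=\inf_{\Omega}q$, this set is open and nonempty, hence contains a ball $B$. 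I would then take $\phi\in C_{c}^{\infty}(B)$ with $\phi\geq0$ and $\phi\not\equiv0$; since $a,b$ are locally integrable weights and $\phi$ has compact support, $\phi\in W_{a,b}^{1,p(.)}\left(\Omega\right)$, its boundary trace vanishes, and $K:=\int_{\Omega}b(x)\left\vert\phi\right\vert^{q(x)}dx>0$.

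Next I would estimate $J_{\lambda}(t\phi)$ term by term for $t\in\left(0,1\right)$, using that $t\mapsto t^{s}$ is decreasing in $s$ on this range. Since $p(x)\geq p^{-}$ and $\tfrac{1}{p(x)}\leq\tfrac{1}{p^{-}}$, the positive part is bounded above by $\tfrac{t^{p^{-}}}{p^{-}}\int_{\Omega}a(x)\left\vert\nabla\phi\right\vert^{p(x)}dx$, the boundary integral being zero because $\phi$ has compact support in $\Omega$. For the negative part, on $\operatorname{supp}\phi\subset B\subset\Omega_{\epsilon}$ one has $q(x)<q^{-}+\epsilon$, hence $t^{q(x)}\geq t^{q^{-}+\epsilon}$ and $\tfrac{1}{q(x)}\geq\tfrac{1}{q^{+}}$, giving the lower bound $\lambda\int_{\Omega}\tfrac{b(x)}{q(x)}t^{q(x)}\left\vert\phi\right\vert^{q(x)}dx\geq\tfrac{\lambda t^{q^{-}+\epsilon}}{q^{+}}K$.

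Combining the two estimates and writing $A=\int_{\Omega}a(x)\left\vert\nabla\phi\right\vert^{p(x)}dx<\infty$ yields
\[
J_{\lambda}(t\phi)\leq t^{q^{-}+\epsilon}\left(\frac{t^{\,p^{-}-q^{-}-\epsilon}}{p^{-}}A-\frac{\lambda}{q^{+}}K\right).
\]
Because $\epsilon<p^{-}-q^{-}$, the exponent $p^{-}-q^{-}-\epsilon$ is strictly positive, so $t^{\,p^{-}-q^{-}-\epsilon}\to0$ as $t\to0^{+}$ and the bracketed quantity tends to $-\tfrac{\lambda}{q^{+}}K<0$. Hence for all sufficiently small $t>0$ the bracket is negative and $J_{\lambda}(t\phi)<0$, which is exactly the assertion of the lemma.

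The only genuinely delicate step is the localization of the support: one must place $\phi$ where $q$ lies within $\epsilon$ of its infimum so that the negative term contributes the power $t^{q^{-}+\epsilon}$, strictly smaller than the positive terms' leading power $t^{p^{-}}$; without this the generic lower bound $t^{q^{+}}$ would be too weak since $q^{+}>p^{-}$. Everything else is routine monotonicity of $t\mapsto t^{s}$ on $(0,1)$ together with the finiteness of the weighted integrals guaranteed by local integrability of $a$ and $b$. I note that of the full chain $1<q^{-}<p^{-}<q^{+}<p^{+}$ only $q^{-}<p^{-}$ is used here; the remaining inequalities are needed in the accompanying Ekeland-type existence argument that combines this lemma with Lemma \ref{lem2}.
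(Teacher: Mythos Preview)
Your proof is correct and follows essentially the same approach as the paper: fix $\epsilon$ with $q^{-}+\epsilon<p^{-}$, localize to the open set where $q(x)<q^{-}+\epsilon$, choose a nonnegative $C_{0}^{\infty}$ test function there, and compare the resulting powers $t^{p^{-}}$ and $t^{q^{-}+\epsilon}$ for $t\in(0,1)$. The only cosmetic difference is that the paper takes $\phi$ with $\overline{\Omega_{0}}\subset\operatorname{supp}\phi$ and $\phi\equiv1$ on $\Omega_{0}$ (then normalizes $\Vert\phi\Vert_{\beta(x)}=1$ and restricts the negative integral to $\Omega_{0}$), whereas you take $\operatorname{supp}\phi\subset\Omega_{\epsilon}$ and keep the constant $A$ explicit; both lead to the same comparison inequality and the same threshold exponent $p^{-}-q^{-}-\epsilon>0$.
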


\begin{proof}
Using the assumption $q^{-}<p^{-}$ we have $q^{-}+\varepsilon _{0}<p^{-}$
for a $\varepsilon _{0}>0$. Since $q\in C\left( \overline{\Omega }\right) $,
then we can find an open subset $\Omega _{0}\subset \Omega $ such that $%
\left\vert q(x)-q^{-}\right\vert <\varepsilon _{0}$ for any $x\in \Omega
_{0} $. Thus, we infer that%
\begin{equation}
q(x)\leq q^{-}+\varepsilon _{0}<p^{-}  \label{P12}
\end{equation}%
for all $x\in \Omega _{0}$. Let $\phi \in C_{0}^{\infty }\left( \Omega
\right) $ be such that $\overline{\Omega _{0}}\subset \sup $p$\phi $, $\phi
(x)=1$ for all $x\in \overline{\Omega _{0}}$ and $0\leq \phi \leq 1$ in $%
\Omega $. Without loss of generality, we may assume that $\left\Vert \phi
\right\Vert _{\beta (x)}=1$, that is 
\begin{equation}
I_{\beta (x)}(u)=\int\limits_{\Omega }a(x)\left\vert \nabla u(x)\right\vert
^{p(x)}dx+\int\limits_{\partial \Omega }\beta (x)\left\vert u(x)\right\vert
^{p(x)}d\sigma =1.  \label{P13}
\end{equation}%
Then using (\ref{P12}) and (\ref{P13}) for any $t\in \left( 0,1\right) ,$ we
have%
\begin{eqnarray*}
J_{\lambda }\left( t\phi \right) &\leq &\frac{t^{p^{-}}}{p^{-}}I_{\beta
(x)}(u)-\frac{\lambda }{q^{+}}\dint\limits_{\Omega }t^{q(x)}\left\vert \phi
\right\vert ^{q\left( x\right) }b\left( x\right) dx \\
&\leq &\frac{t^{p^{-}}}{p^{-}}-\frac{\lambda }{q^{+}}\dint\limits_{\Omega
_{0}}t^{q(x)}\left\vert \phi \right\vert ^{q\left( x\right) }b\left(
x\right) dx \\
&\leq &\frac{t^{p^{-}}}{p^{-}}-\frac{\lambda t^{q^{-}+\varepsilon _{0}}}{%
q^{+}}\dint\limits_{\Omega _{0}}\left\vert \phi \right\vert ^{q\left(
x\right) }b\left( x\right) dx.
\end{eqnarray*}%
Then, we obtain $J_{\lambda }\left( t\phi \right) <0$ for any $t<\delta ^{%
\frac{1}{p^{-}-q^{-}-\varepsilon _{0}}}$ with $0<\delta <\min \left\{ 1,%
\frac{\lambda p^{-}}{q^{+}}\dint\limits_{\Omega _{0}}\left\vert \phi
\right\vert ^{q\left( x\right) }b\left( x\right) dx\right\} .$ That is the
desired result.
\end{proof}

\begin{theorem}
\label{teo6}Let $1<q^{-}<p^{-}<q^{+}<p^{+}$. Then, there exists $\lambda
^{\ast }>0$ such that any $\lambda \in \left( 0,\lambda ^{\ast }\right) $ is
an eigenvalue for the problem (\ref{P}).
\end{theorem}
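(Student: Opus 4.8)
The plan is to combine Lemma \ref{lem2} and Lemma \ref{lem3} through Ekeland's variational principle in the classical way, and then to recover compactness by hand, since Lemma \ref{PScon} was proved under the opposite ordering $p^{+}<\eta <q^{-}$ and is not available in the present regime $1<q^{-}<p^{-}<q^{+}<p^{+}$. First I would fix $\rho \in \left( 0,1\right) $ with $\rho <1/C_{2}$ as in Lemma \ref{lem2}, let $\lambda ^{\ast }$ be the number defined by (\ref{P11}), and fix an arbitrary $\lambda \in \left( 0,\lambda ^{\ast }\right) $; it then suffices to produce a nontrivial critical point of $J_{\lambda }$. I would work on the closed ball $\overline{B}_{\rho }=\left\{ u\in W_{a,b}^{1,p(.)}\left( \Omega \right) :\left\Vert u\right\Vert _{\beta (x)}\leq \rho \right\} $, a complete metric space for the metric induced by $\left\Vert .\right\Vert _{\beta (x)}$. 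The estimate in the proof of Lemma \ref{lem2} gives $J_{\lambda }\left( u\right) \geq -\frac{\lambda }{q^{-}}C_{2}^{q^{-}}\rho ^{q^{-}}$ on $\overline{B}_{\rho }$, so $c:=\inf_{\overline{B}_{\rho }}J_{\lambda }>-\infty $; on the other hand, for the function $\phi $ of Lemma \ref{lem3} (normalised so that $\left\Vert \phi \right\Vert _{\beta (x)}=1$) we have $t\phi \in \overline{B}_{\rho }$ and $J_{\lambda }\left( t\phi \right) <0$ for $t$ small, hence $c<0$.

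Next I would apply Ekeland's variational principle on $\overline{B}_{\rho }$: for each $n\in \mathbb{N}$ there is $u_{n}\in \overline{B}_{\rho }$ with $J_{\lambda }\left( u_{n}\right) \leq c+\frac{1}{n}$ and $J_{\lambda }\left( w\right) \geq J_{\lambda }\left( u_{n}\right) -\frac{1}{n}\left\Vert w-u_{n}\right\Vert _{\beta (x)}$ for all $w\in \overline{B}_{\rho }$. For $n$ large, $J_{\lambda }\left( u_{n}\right) <0<\gamma $, while Lemma \ref{lem2} gives $J_{\lambda }\geq \gamma $ on the sphere $\left\Vert u\right\Vert _{\beta (x)}=\rho $; therefore $\left\Vert u_{n}\right\Vert _{\beta (x)}<\rho $, that is, $u_{n}$ lies in the interior of the ball. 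Hence for every $v\in W_{a,b}^{1,p(.)}\left( \Omega \right) $ with $\left\Vert v\right\Vert _{\beta (x)}=1$ and every sufficiently small $t>0$ one has $u_{n}+tv\in \overline{B}_{\rho }$, so $\frac{J_{\lambda }\left( u_{n}+tv\right) -J_{\lambda }\left( u_{n}\right) }{t}\geq -\frac{1}{n}$; letting $t\rightarrow 0^{+}$ and then replacing $v$ by $-v$ yields $\left\Vert J_{\lambda }^{\prime }\left( u_{n}\right) \right\Vert \leq \frac{1}{n}$. Thus $\left( u_{n}\right) $ is a bounded sequence with $J_{\lambda }\left( u_{n}\right) \rightarrow c$ and $J_{\lambda }^{\prime }\left( u_{n}\right) \rightarrow 0$ in $W_{a^{\ast },b^{\ast }}^{-1,p^{\prime }(.)}\left( \Omega \right) $.

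Finally I would run the compactness argument from the proof of Lemma \ref{PScon}, which uses only boundedness of the sequence (automatic here) and not the ordering of the exponents: by reflexivity $u_{n}\rightharpoonup u$ along a subsequence, by the compact embedding $W_{a,b}^{1,p(.)}\left( \Omega \right) \hookrightarrow L_{b}^{q(.)}\left( \Omega \right) $ of Proposition \ref{pro2} we get $u_{n}\rightarrow u$ in $L_{b}^{q(.)}\left( \Omega \right) $, the Hölder estimate shows $\int_{\Omega }b(x)\left\vert u_{n}\right\vert ^{q(x)-2}u_{n}\left( u_{n}-u\right) dx\rightarrow 0$, and since $J_{\lambda }^{\prime }\left( u_{n}\right) \rightarrow 0$ with $\left( u_{n}-u\right) $ bounded, $J_{\lambda }^{\prime }\left( u_{n}\right) \left( u_{n}-u\right) \rightarrow 0$; subtracting gives $L_{\beta (x)}^{\prime }\left( u_{n}\right) \left( u_{n}-u\right) \rightarrow 0$, whence $u_{n}\rightarrow u$ strongly by the $\left( S_{+}\right) $ property of Proposition \ref{pro4}. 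Consequently $J_{\lambda }\left( u\right) =c<0=J_{\lambda }\left( 0\right) $, so $u\neq 0$, and $J_{\lambda }^{\prime }\left( u\right) =0$ by continuity of $J_{\lambda }^{\prime }$; thus $u$ is a nontrivial weak solution of (\ref{P}), i.e. $\lambda $ is an eigenvalue, and since $\lambda \in \left( 0,\lambda ^{\ast }\right) $ was arbitrary this proves the theorem. I expect the main obstacle to be exactly this last step: because Lemma \ref{PScon} is proved only for $p^{+}<\eta <q^{-}$ and, for $1<q^{-}<p^{-}<q^{+}<p^{+}$, $J_{\lambda }$ is in general neither coercive nor globally (PS), one must confine the entire argument to $\overline{B}_{\rho }$, check carefully that the Ekeland point is interior so the one-sided inequality upgrades to $J_{\lambda }^{\prime }\left( u_{n}\right) \rightarrow 0$ in the full dual space, and only then reuse the Hölder/compact-embedding/$\left( S_{+}\right) $ chain to pass to the limit.
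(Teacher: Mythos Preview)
Your argument is correct and follows the same route as the paper: Ekeland's variational principle on the closed ball $\overline{B}_{\rho}$, interiority of the Ekeland point via Lemma~\ref{lem2}, then strong convergence to a nontrivial critical point. You are in fact more careful than the paper at the final step: the paper simply writes ``since $J_{\lambda}$ satisfies PS-condition'' and appeals to Lemma~\ref{PScon}, which was established only under $p^{+}<\eta<q^{-}$, whereas you correctly observe that boundedness is automatic here (the sequence sits in $\overline{B}_{\rho}$) and rerun only the H\"{o}lder/compact-embedding/$(S_{+})$ portion of that proof, which does not depend on the exponent ordering.
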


\begin{proof}
Let $\lambda \in \left( 0,\lambda ^{\ast }\right) $, where $\lambda ^{\ast }$
is defined as (\ref{P11}). By Lemma \ref{lem2} it follows that on the
boundary of the ball centered at the origin and of radius $\rho $ in $%
W_{a,b}^{1,p(.)}\left( \Omega \right) $, denoted by $B_{\rho }\left(
0\right) ,$ we have%
\begin{equation*}
\inf_{\partial B_{\rho }\left( 0\right) }J_{\lambda }>0.
\end{equation*}

On the other hand, from Lemma \ref{lem3}, there exists $\phi \in
W_{a,b}^{1,p(.)}\left( \Omega \right) $ such that $J_{\lambda }\left( t\phi
\right) <0$ for $t>0$ small enough. Moreover, we get%
\begin{equation*}
J_{\lambda }(u)\geq \frac{1}{p^{+}}\left\Vert u\right\Vert _{\beta
(x)}^{p^{+}}-\frac{\lambda }{q^{-}}C_{2}^{q^{-}}\left\Vert u\right\Vert
_{\beta (x)}^{q^{-}}
\end{equation*}%
for any $u\in W_{a,b}^{1,p(.)}\left( \Omega \right) .$ This follows that%
\begin{equation*}
-\infty <\underline{c}=\inf_{\overline{B_{\rho }\left( 0\right) }}J_{\lambda
}<0.
\end{equation*}%
Let choose $\varepsilon >0$. Therefore, we have%
\begin{equation*}
0<\varepsilon <\inf_{\partial B_{\rho }\left( 0\right) }J_{\lambda
}-\inf_{B_{\rho }\left( 0\right) }J_{\lambda }.
\end{equation*}%
Applying Ekeland's variational principle to the functional $J_{\lambda }:%
\overline{B_{\rho }\left( 0\right) }\longrightarrow 
\mathbb{R}
,$ we find $u_{\varepsilon }\in \overline{B_{\rho }\left( 0\right) }$ such
that%
\begin{eqnarray*}
J_{\lambda }\left( u_{\varepsilon }\right) &<&\inf_{\overline{B_{\rho
}\left( 0\right) }}J_{\lambda }+\varepsilon , \\
J_{\lambda }\left( u_{\varepsilon }\right) &<&J_{\lambda }\left( u\right)
+\varepsilon \left\Vert u-u_{\varepsilon }\right\Vert _{\beta (x)},\text{ }%
u\neq u_{\varepsilon }.
\end{eqnarray*}%
Since 
\begin{equation*}
J_{\lambda }\left( u_{\varepsilon }\right) \leq \inf_{\overline{B_{\rho
}\left( 0\right) }}J_{\lambda }+\varepsilon \leq \inf_{B_{\rho }\left(
0\right) }J_{\lambda }+\varepsilon <\inf_{\partial B_{\rho }\left( 0\right)
}J_{\lambda },
\end{equation*}%
we can infer that $u_{\varepsilon }\in B_{\rho }\left( 0\right) .$ Set a
function $\psi :\overline{B_{\rho }\left( 0\right) }\longrightarrow 
\mathbb{R}
$ such that $\psi \left( u\right) =J_{\lambda }(u)+\varepsilon \left\Vert
u-u_{\varepsilon }\right\Vert _{\beta (x)}.$ It is easy to see that $%
u_{\varepsilon }$ is a minimum point of $\psi .$ This follows 
\begin{equation*}
\frac{\psi \left( u_{\varepsilon }+tv\right) -\psi \left( u_{\varepsilon
}\right) }{t}\geq 0
\end{equation*}%
for small $t>0$ and any $v\in B_{\rho }\left( 0\right) $. From the above
relation we obtain%
\begin{equation*}
\frac{J_{\lambda }\left( u_{\varepsilon }+tv\right) -J_{\lambda }\left(
u_{\varepsilon }\right) }{t}+\varepsilon \left\Vert v\right\Vert _{\beta
(x)}\geq 0.
\end{equation*}%
By letting $t\longrightarrow 0,$ we get%
\begin{equation}
\left\langle J_{\lambda }^{\prime }\left( u_{\varepsilon }\right)
,v\right\rangle +\varepsilon \left\Vert v\right\Vert _{\beta (x)}\geq 0.
\label{P14}
\end{equation}%
Replacing $v$ by $-v$ in (\ref{P14}), we get $\left\vert \left\langle
J_{\lambda }^{\prime }\left( u_{\varepsilon }\right) ,v\right\rangle
\right\vert \leq \varepsilon \left\Vert v\right\Vert _{\beta (x)}$. It is
known that $J_{\lambda }^{\prime }\left( u_{\varepsilon }\right) \in
W_{a^{\ast },b^{\ast }}^{-1,p^{\prime }(.)}\left( \Omega \right) $. If we
use the definition of operator norm for $W_{a^{\ast },b^{\ast
}}^{-1,p^{\prime }(.)}\left( \Omega \right) $, we have $\left\Vert
J_{\lambda }^{\prime }\left( u_{\varepsilon }\right) \right\Vert
_{W_{a^{\ast },b^{\ast }}^{-1,p^{\prime }(.)}\left( \Omega \right) }\leq
\varepsilon .$ Therefore, we conclude that there is a sequence $\left\{
w_{n}\right\} \subset B_{\rho }\left( 0\right) $ such that%
\begin{equation}
J_{\lambda }\left( w_{n}\right) \longrightarrow \underline{c}\text{ and }%
J_{\lambda }^{\prime }\left( w_{n}\right) \longrightarrow 0.  \label{P15}
\end{equation}%
Since $J_{\lambda }$ satisfies PS-condition on $W_{a,b}^{1,p(.)}\left(
\Omega \right) ,$ we get that $\left\{ w_{n}\right\} $ converges strongly to 
$w$ in $W_{a,b}^{1,p(.)}\left( \Omega \right) $. This follows by (\ref{P15})
that 
\begin{equation*}
J_{\lambda }\left( w\right) =\underline{c}<0\text{ and }J_{\lambda }^{\prime
}\left( w\right) =0.
\end{equation*}%
We obtain that $w$ is a non-trivial weak solution for the problem (\ref{P})
and thus any $\lambda \in \left( 0,\lambda ^{\ast }\right) $ is an
eigenvalue of the problem (\ref{P}). That is the desired result.
\end{proof}

\begin{corollary}
There exists $\lambda ^{\ast }>0$ such that for any $\lambda \in \left(
0,\lambda ^{\ast }\right) $ the problem (\ref{P}) has at least two
non-negative non-trivial weak solutions.

\begin{proof}
By Theorem \ref{teo5}, we deduce a function $u\in W_{a,b}^{1,p(.)}\left(
\Omega \right) $ as a non-trivial critical point of the functional $%
J_{\lambda }$ with $J_{\lambda }\left( u\right) =\overline{c}>0$ and a
non-trivial weak solution of the problem (\ref{P}). Moreover, the problem (%
\ref{P}) has a non-trivial weak solution $w$ in $W_{a,b}^{1,p(.)}\left(
\Omega \right) $ with $J_{\lambda }\left( w\right) =\underline{c}<0$ by
Theorem \ref{teo6}. Finally, we point out the fact that $u\neq w$ since $%
J_{\lambda }\left( u\right) =\overline{c}>0>\underline{c}=J_{\lambda }\left(
w\right) .$ Therefore, the problem (\ref{P}) has at least two non-negative
non-trivial weak solutions due to $J_{\lambda }\left( u\right) =J_{\lambda
}\left( \left\vert u\right\vert \right) .$ This completes the proof.
\end{proof}
\end{corollary}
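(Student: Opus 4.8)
The strategy is to superpose the two existence results already established. Theorem~\ref{teo5} furnishes a weak solution sitting at a strictly positive critical level of the energy functional $J_\lambda$, while Theorem~\ref{teo6} furnishes, for $\lambda$ in a suitable interval $(0,\lambda^\ast)$, a second weak solution sitting at a strictly negative level; the mismatch of signs forces the two solutions to be distinct, and non-negativity of each is then extracted from the elementary identity $J_\lambda(u)=J_\lambda(|u|)$, which holds because $|\nabla|u||=|\nabla u|$ a.e., because replacing $u$ by $|u|$ alters neither $|u(x)|^{p(x)}$, $|u(x)|^{q(x)}$ nor the boundary trace integral, and which in particular gives $\||u|\|_{\beta(x)}=\|u\|_{\beta(x)}$.

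Concretely, first I would invoke Theorem~\ref{teo5}: under its hypothesis, the Mountain Pass geometry verified there together with the (PS) condition of Lemma~\ref{PScon} yields a critical point $u\in W_{a,b}^{1,p(.)}(\Omega)$ of $J_\lambda$ at the minimax level $\overline{c}=\inf_{\gamma}\max_{t\in[0,1]}J_\lambda(\gamma(t))>0$; by the variational characterization of weak solutions, $u$ solves (\ref{P}), and $u\neq 0$ because $J_\lambda(0)=0<\overline{c}$. Next I would invoke Theorem~\ref{teo6}: for $\lambda\in(0,\lambda^\ast)$ with $\lambda^\ast$ as in (\ref{P11}), Lemmas~\ref{lem2} and~\ref{lem3} give $\inf_{\partial B_\rho(0)}J_\lambda>0>\underline{c}:=\inf_{\overline{B_\rho(0)}}J_\lambda$, and Ekeland's variational principle together with (PS) produces $w\in B_\rho(0)$ with $J_\lambda(w)=\underline{c}<0$ and $J_\lambda'(w)=0$. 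Since $J_\lambda(u)=\overline{c}>0>\underline{c}=J_\lambda(w)$, we have $u\neq w$, so (\ref{P}) already has two nontrivial weak solutions.

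It remains to make both solutions non-negative. For $w$ this is immediate: $|w|\in\overline{B_\rho(0)}$ with $J_\lambda(|w|)=J_\lambda(w)=\inf_{\overline{B_\rho(0)}}J_\lambda$, so $|w|$ is again a minimizer over the ball, and since $\||w|\|_{\beta(x)}=\|w\|_{\beta(x)}<\rho$ it is interior, whence $J_\lambda'(|w|)=0$; also $|w|\neq 0$ as its energy is $\underline{c}\neq 0$. For $u$ one may either check that the Mountain Pass level is unchanged if the minimax is taken over paths in the cone $\{v\geq 0\}$ — the end-point can be chosen non-negative since $J_\lambda(t|\phi_0|)\to-\infty$ for $\phi_0\neq 0$, and $s\mapsto|\gamma(s)|$ is a continuous path into $W_{a,b}^{1,p(.)}(\Omega)$ with the same endpoints and the same value of $\max_s J_\lambda$ — or, more cleanly, replace $J_\lambda$ by the functional obtained by substituting $(u^+)^{q(x)}$ for $|u|^{q(x)}$: it has the same geometry, and testing its Euler equation against $u^-=\min(u,0)$ makes the left-hand side equal $I_{\beta(x)}(u^-)\geq 0$ and the right-hand side vanish, forcing $u^-=0$, so its critical points are non-negative and solve (\ref{P}). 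The two non-negative solutions so obtained still have energies of opposite signs, hence are distinct. The one genuinely non-routine point is this non-negativity step for the Mountain Pass solution; all the rest is a direct assembly of Lemma~\ref{PScon}, Theorem~\ref{teo5} and Theorem~\ref{teo6}.
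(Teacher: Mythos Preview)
Your argument tracks the paper's proof closely: one solution from Theorem~\ref{teo5} at a positive Mountain Pass level $\overline{c}$, a second from Theorem~\ref{teo6} at a negative Ekeland level $\underline{c}$, distinctness from $\overline{c}>0>\underline{c}$, and non-negativity via $J_\lambda(u)=J_\lambda(|u|)$. You are in fact more scrupulous than the paper on the last step --- the paper simply records the identity $J_\lambda(u)=J_\lambda(|u|)$ and stops, whereas you explain why $|w|$ is again a critical point (interior minimizer on the ball) and offer two routes to a non-negative Mountain Pass solution.

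There is, however, a structural gap that neither your proposal nor the paper's proof confronts. Theorem~\ref{teo5} is stated under the hypothesis $p^{+}<\eta<q^{-}$, hence requires $p^{+}<q^{-}$; Theorem~\ref{teo6} (via Lemmas~\ref{lem2} and~\ref{lem3}) is stated under $1<q^{-}<p^{-}<q^{+}<p^{+}$, hence requires $q^{-}<p^{-}\le p^{+}$. These two regimes are mutually exclusive, so the two existence results cannot be invoked simultaneously for the same pair $p,q$. The corollary carries no hypothesis on $p,q$, and both proofs tacitly assume both regimes at once. This is a defect inherited from the paper's formulation rather than an error in your reasoning, but as written the superposition strategy cannot go through; you would need either a single set of assumptions under which both a positive-level and a negative-level critical point coexist, or a different source for one of the two solutions.
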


For the multiplicity solutions of the problem (\ref{P}), we will use some
well-known results.

\begin{lemma}
(see \cite{FanZ}) Let $X$ be a reflexive and separable Banach space. Then
there exist $\left\{ e_{j}\right\} \subset X$ and $\left\{ e_{j}^{\ast
}\right\} \subset X^{\ast }$ such that%
\begin{equation*}
X=\overline{span\left\{ e_{j}\left\vert j=1,2,...\right. \right\} },\text{ \
\ \ }X^{\ast }=\overline{span\left\{ e_{j}^{\ast }\left\vert
j=1,2,...\right. \right\} }
\end{equation*}%
and%
\begin{equation*}
\left\langle e_{i}^{\ast },e_{j}\right\rangle =\left\{ 
\begin{array}{c}
1,\text{ \ \ \ }i=j \\ 
0,\text{ \ \ \ }i\neq j%
\end{array}%
\right.
\end{equation*}%
where $\left\langle \cdot ,\cdot \right\rangle $ denotes the duality product
between $X$ and $X^{\ast }.$
\end{lemma}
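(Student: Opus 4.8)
The plan is to build the biorthogonal system $\{e_j;e_j^{\ast}\}$ by a back--and--forth construction. First I would record the two standing facts we rely on: since $X$ is reflexive and separable, its bidual $X^{\ast\ast}\cong X$ is separable, hence $X^{\ast}$ is separable as well; so fix once and for all a sequence $\{f_n\}_{n\geq 1}$ that is dense in $X$ and a sequence $\{g_n\}_{n\geq 1}$ that is dense in $X^{\ast}$. If $\dim X<\infty$ the statement is trivial, so assume $X$ is infinite-dimensional.

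The engine is a one-step extension lemma. Given a finite biorthogonal system $\{x_1,\dots,x_m;x_1^{\ast},\dots,x_m^{\ast}\}$, meaning $\langle x_i^{\ast},x_j\rangle=\delta_{ij}$, and a vector $x\in X$, one can adjoin at most one new pair $\{x_{m+1};x_{m+1}^{\ast}\}$ so that the enlarged system is still biorthogonal and $x\in\operatorname{span}\{x_1,\dots,x_{m+1}\}$: with the finite-rank projection $Px=\sum_{i=1}^{m}\langle x_i^{\ast},x\rangle x_i$, either $x=Px$ and nothing need be added, or $x_{m+1}:=x-Px\neq 0$, in which case one picks $\varphi\in X^{\ast}$ with $\langle\varphi,x_{m+1}\rangle=1$ by Hahn--Banach and sets $x_{m+1}^{\ast}:=\varphi-\sum_{i=1}^{m}\langle\varphi,x_i\rangle x_i^{\ast}$; a direct computation then gives $\langle x_i^{\ast},x_{m+1}\rangle=0$ and $\langle x_{m+1}^{\ast},x_i\rangle=0$ for $i\leq m$, and $\langle x_{m+1}^{\ast},x_{m+1}\rangle=1$. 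Symmetrically, given instead a functional $x^{\ast}\in X^{\ast}$, one adjoins at most one pair so that $x^{\ast}\in\operatorname{span}\{x_1^{\ast},\dots,x_{m+1}^{\ast}\}$: set $x_{m+1}^{\ast}:=x^{\ast}-\sum_{i=1}^{m}\langle x^{\ast},x_i\rangle x_i^{\ast}$, and if this is nonzero use reflexivity to find $\psi\in X=X^{\ast\ast}$ with $\langle x_{m+1}^{\ast},\psi\rangle=1$, then put $x_{m+1}:=\psi-\sum_{i=1}^{m}\langle x_i^{\ast},\psi\rangle x_i$; the same routine check confirms biorthogonality.

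With these two moves available I would interlace them. Starting from the empty system, at the odd stage $2n-1$ apply the first move to guarantee $f_n\in\operatorname{span}\{e_1,e_2,\dots\}$, and at the even stage $2n$ apply the second move to guarantee $g_n\in\operatorname{span}\{e_1^{\ast},e_2^{\ast},\dots\}$. Each stage preserves biorthogonality and enlarges the (finite) system by zero or one element; since $X$ is infinite-dimensional the system is genuinely infinite, so we obtain sequences $\{e_j\}\subset X$ and $\{e_j^{\ast}\}\subset X^{\ast}$ with $\langle e_i^{\ast},e_j\rangle=\delta_{ij}$. By construction $\operatorname{span}\{e_j\}$ contains every $f_n$ and $\operatorname{span}\{e_j^{\ast}\}$ contains every $g_n$, so passing to closures and invoking the density of $\{f_n\}$ in $X$ and of $\{g_n\}$ in $X^{\ast}$ yields $\overline{\operatorname{span}}\{e_j\}=X$ and $\overline{\operatorname{span}}\{e_j^{\ast}\}=X^{\ast}$, which is exactly the assertion.

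The only genuinely delicate point is the dual one-step extension: producing the new basis \emph{vector} $x_{m+1}$ inside $X$, rather than merely inside $X^{\ast\ast}$, is precisely where reflexivity enters, so that hypothesis cannot be removed. Everything else is Hahn--Banach plus the bookkeeping needed to verify that biorthogonality survives each adjunction.
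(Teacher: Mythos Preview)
The paper does not supply its own proof of this lemma; it is quoted from \cite{FanZ} as a known structural fact, so there is no argument in the paper to compare yours against. Your back-and-forth construction is the standard Markushevich-basis argument and is correct: the two one-step extensions are set up properly, the biorthogonality checks go through as you indicate, and interlacing them over dense sequences in $X$ and $X^{\ast}$ gives the required total biorthogonal system.

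One small expository correction to your closing remark. You locate the essential use of reflexivity in the dual one-step extension, saying it is needed to produce $x_{m+1}$ inside $X$ rather than merely in $X^{\ast\ast}$. In fact that step needs no reflexivity at all: if $x_{m+1}^{\ast}\in X^{\ast}$ is nonzero then by definition there is some $\psi\in X$ with $\langle x_{m+1}^{\ast},\psi\rangle\neq 0$, and one just rescales. The place reflexivity is genuinely used is the one you already invoked in your first paragraph, namely to conclude that $X^{\ast}$ is separable (via $X^{\ast\ast}\cong X$ separable $\Rightarrow$ $X^{\ast}$ separable), without which the dense sequence $\{g_n\}\subset X^{\ast}$ need not exist and the conclusion $\overline{\operatorname{span}}\{e_j^{\ast}\}=X^{\ast}$ can fail (e.g.\ $X=\ell^{1}$, whose dual $\ell^{\infty}$ is nonseparable). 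So the hypothesis indeed cannot be dropped, but for the reason in your opening paragraph rather than your last.
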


For convenience, we write $X_{j}=span\left\{ e_{j}\right\} ,$ $%
Y_{k}=\dbigoplus\limits_{j=1}^{k}X_{j},$ $Z_{k}=\dbigoplus\limits_{j=k}^{%
\infty }X_{j}.$

\begin{lemma}
If we define the set 
\begin{equation*}
\alpha _{k}=\sup \left\{ \left\Vert u\right\Vert _{q\left( .\right)
,b}:\left\Vert u\right\Vert _{\beta (x)}=1,\text{ }u\in Z_{k}\right\} ,
\end{equation*}%
then $\underset{k\longrightarrow \infty }{\lim }\alpha _{k}=0.$
\end{lemma}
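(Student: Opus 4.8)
The plan is to run the standard Fountain--Theorem argument for the vanishing of such sup-norms. First I would note that $\alpha_{k}$ is non-increasing: since $Z_{k+1}\subseteq Z_{k}$, the supremum defining $\alpha_{k+1}$ is taken over a smaller set, so $0\le\alpha_{k+1}\le\alpha_{k}$. Hence the limit $\alpha:=\lim_{k\rightarrow\infty}\alpha_{k}\ge 0$ exists, and it suffices to show $\alpha=0$.

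For each $k$ I would choose, by definition of the supremum, $u_{k}\in Z_{k}$ with $\left\Vert u_{k}\right\Vert _{\beta(x)}=1$ and $\left\Vert u_{k}\right\Vert _{q(.),b}\ge\alpha_{k}-\frac1k$. Because $\left\Vert \cdot\right\Vert _{\beta(x)}$ is equivalent to $\left\Vert \cdot\right\Vert _{a,b}^{1,p(.)}$, the sequence $(u_{k})$ is bounded in $W_{a,b}^{1,p(.)}\left(\Omega\right)$; by reflexivity, along a subsequence (not relabeled) $u_{k}\rightharpoonup u$ weakly in $W_{a,b}^{1,p(.)}\left(\Omega\right)$ for some $u$.

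The crucial step is to show $u=0$. Fix $j\in\mathbb{N}$. For every $k>j$ the element $u_{k}\in Z_{k}$ lies in the closed span of $\left\{ e_{i}:i\ge k\right\} $, and since $\left\langle e_{j}^{\ast},e_{i}\right\rangle =0$ for all $i\ge k>j$, continuity of $e_{j}^{\ast}$ gives $\left\langle e_{j}^{\ast},u_{k}\right\rangle =0$. Passing to the weak limit yields $\left\langle e_{j}^{\ast},u\right\rangle =0$ for every $j$, and since $X^{\ast}=\overline{span\left\{ e_{j}^{\ast}:j\ge1\right\} }$, we conclude $u=0$. Now I would invoke Proposition \ref{pro2}: the embedding $W_{a,b}^{1,p(.)}\left(\Omega\right)\hookrightarrow L_{b}^{q(.)}\left(\Omega\right)$ is compact, so $u_{k}\rightharpoonup 0$ in $W_{a,b}^{1,p(.)}\left(\Omega\right)$ forces $u_{k}\rightarrow 0$ strongly in $L_{b}^{q(.)}\left(\Omega\right)$, i.e. $\left\Vert u_{k}\right\Vert _{q(.),b}\rightarrow 0$. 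Combined with $\alpha_{k}-\frac1k\le\left\Vert u_{k}\right\Vert _{q(.),b}$ along the subsequence and the convergence of the full sequence $(\alpha_{k})$, this forces $\alpha=0$, as desired.

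I expect the weak-limit-is-zero step to be the main obstacle: it is the only place where the specific structure of the biorthogonal system $\left\{ e_{j},e_{j}^{\ast}\right\} $ and the definition of $Z_{k}$ enter, and one must be slightly careful that $e_{j}^{\ast}$ annihilates not merely the $e_{i}$ with $i\ge k$ but their closed span (so that it annihilates $u_{k}$). The remaining ingredients --- monotonicity of $\alpha_{k}$, boundedness of $(u_{k})$, reflexivity of $W_{a,b}^{1,p(.)}\left(\Omega\right)$, and the compact embedding from Proposition \ref{pro2} --- are routine.
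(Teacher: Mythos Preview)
Your proof is correct and is precisely the standard argument the paper invokes: the paper's proof simply refers to ``the method in (Lemma 4.9, \cite{FanZ})'' together with the embedding $W_{a,b}^{1,p(.)}(\Omega)\hookrightarrow L_{b}^{q(.)}(\Omega)$, and what you have written out is exactly that method (monotonicity of $\alpha_{k}$, choice of near-optimizers $u_{k}\in Z_{k}$, weak limit $0$ via the biorthogonal system, and strong convergence in $L_{b}^{q(.)}$ by compactness). The only minor remark is that the paper says ``continuous embedding'' where your use of the \emph{compact} embedding from Proposition~\ref{pro2} is what is actually needed, so your version is in fact slightly more precise.
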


\begin{proof}
Using the continuous embedding $W_{a,b}^{1,p(.)}\left( \Omega \right)
\hookrightarrow L_{b}^{q(.)}(\Omega )$ and the method in (Lemma 4.9, \cite%
{FanZ}), then we get $\lim_{k\longrightarrow \infty }\alpha _{k}=0.$
\end{proof}

Now, we are ready to give our main result.

\begin{theorem}
Let $p^{+}<q^{-}.$ Then $J_{\lambda }$ has a sequence of critical point $%
\left\{ u_{n}\right\} $ such that $J_{\lambda }\left( u_{n}\right)
\longrightarrow +\infty $ and the problem (\ref{P}) has infinite many pairs
of solutions.
\end{theorem}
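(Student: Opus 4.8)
The plan is to apply the Fountain Theorem (see \cite{Wil}), which is the standard tool for producing an unbounded sequence of critical values of an even functional. First I would note that $J_{\lambda}$ is even because $u\mapsto\left\vert\nabla u\right\vert^{p(x)}$, $u\mapsto\left\vert u\right\vert^{p(x)}$ and $u\mapsto\left\vert u\right\vert^{q(x)}$ are all even, and $J_{\lambda}(0)=0$. Since $p^{+}<q^{-}$, Lemma \ref{PScon} already gives that $J_{\lambda}$ satisfies the (PS) condition on $W_{a,b}^{1,p(.)}\left(\Omega\right)$, so the compactness hypothesis of the Fountain Theorem is in hand. Using the decomposition $W_{a,b}^{1,p(.)}\left(\Omega\right)=Y_{k}\oplus Z_{k}$ from the paragraph preceding the statement, it remains to verify the two geometric conditions: that on $Z_{k}$ the functional stays bounded below by a positive constant on a sphere of suitable (large) radius $r_{k}$, and that on the finite-dimensional $Y_{k}$ the functional tends to $-\infty$ on a larger sphere of radius $\rho_{k}>r_{k}$.

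For the condition on $Z_{k}$, I would take $u\in Z_{k}$ with $\left\Vert u\right\Vert_{\beta(x)}=r_{k}>1$ and estimate, using Proposition (Deng's) and the embedding constant $\alpha_{k}$,
\begin{equation*}
J_{\lambda}(u)\geq\frac{1}{p^{+}}\left\Vert u\right\Vert_{\beta(x)}^{p^{-}}-\frac{\lambda}{q^{-}}\max\left\{\alpha_{k}^{q^{-}}\left\Vert u\right\Vert_{\beta(x)}^{q^{-}},\alpha_{k}^{q^{+}}\left\Vert u\right\Vert_{\beta(x)}^{q^{+}}\right\},
\end{equation*}
so that for $\left\Vert u\right\Vert_{\beta(x)}$ large the dominant negative term is $\tfrac{\lambda}{q^{-}}\alpha_{k}^{q^{+}}\left\Vert u\right\Vert_{\beta(x)}^{q^{+}}$; choosing $r_{k}=\left(C\alpha_{k}^{q^{+}}\right)^{1/(p^{-}-q^{+})}$ — wait, since $q^{+}>p^{-}$ here, one instead balances so that $r_{k}\to\infty$ as $\alpha_{k}\to 0$, precisely as in the standard Fountain argument, and obtains $\inf_{u\in Z_{k},\,\left\Vert u\right\Vert_{\beta(x)}=r_{k}}J_{\lambda}(u)\to+\infty$. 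For the condition on $Y_{k}$, since $Y_{k}$ is finite-dimensional all norms on it are equivalent, and using Proposition \ref{pro4}-type estimates together with $q^{-}>p^{+}$ one gets, for $v\in Y_{k}$ with $\left\Vert v\right\Vert_{\beta(x)}=\rho_{k}$ large,
\begin{equation*}
J_{\lambda}(v)\leq\frac{1}{p^{-}}\left\Vert v\right\Vert_{\beta(x)}^{p^{+}}-\frac{\lambda}{q^{+}}c_{k}\left\Vert v\right\Vert_{\beta(x)}^{q^{-}}\longrightarrow-\infty
\end{equation*}
as $\rho_{k}\to\infty$, where $c_{k}>0$ depends on the equivalence of norms on $Y_{k}$; one then fixes $\rho_{k}>r_{k}$ with $\max_{u\in Y_{k},\,\left\Vert u\right\Vert_{\beta(x)}=\rho_{k}}J_{\lambda}(u)\leq 0$.

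Having checked all hypotheses, the Fountain Theorem yields an unbounded sequence of critical values $c_{k}=\inf_{\gamma\in\Gamma_{k}}\max_{u\in B_{k}}J_{\lambda}(\gamma(u))\to+\infty$, hence a sequence $\{u_{n}\}$ of critical points of $J_{\lambda}$ with $J_{\lambda}(u_{n})\to+\infty$; since critical points of $J_{\lambda}$ are exactly weak solutions of (\ref{P}) and $J_{\lambda}$ is even, the solutions come in pairs $\{\pm u_{n}\}$, so (\ref{P}) has infinitely many pairs of solutions. The main obstacle I expect is the estimate on $Z_{k}$: one must track the modular–norm inequalities carefully in the regime $\left\Vert u\right\Vert_{\beta(x)}>1$ and $\left\Vert u\right\Vert_{q(.),b}$ possibly less than or greater than $1$, and verify that the choice of $r_{k}$ (driven by $\alpha_{k}\to 0$) genuinely forces $J_{\lambda}(u)\to+\infty$ on the sphere despite the competing powers $p^{-}$ and $q^{+}$; this is exactly where the hypothesis $p^{+}<q^{-}$ and the decay of $\alpha_{k}$ are used in tandem.
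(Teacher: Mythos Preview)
Your proposal is correct and follows essentially the same route as the paper: apply the Fountain Theorem after noting that $J_{\lambda}$ is even and satisfies (PS) (Lemma \ref{PScon}), verify the $Z_{k}$-condition via the estimate $J_{\lambda}(u)\geq\tfrac{1}{p^{+}}\|u\|_{\beta(x)}^{p^{-}}-\lambda\alpha_{k}^{q^{+}}\|u\|_{\beta(x)}^{q^{+}}$ with the radius $\gamma_{k}=(\lambda q^{+}\alpha_{k}^{q^{+}})^{1/(p^{-}-q^{+})}\to\infty$ as $\alpha_{k}\to 0$, and verify the $Y_{k}$-condition by equivalence of norms on the finite-dimensional space together with $q^{-}>p^{+}$. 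Your momentary hesitation about the sign of the exponent is unwarranted: precisely because $p^{-}-q^{+}<0$, the choice $r_{k}=(C\alpha_{k}^{q^{+}})^{1/(p^{-}-q^{+})}$ does tend to infinity, and the paper makes the identical choice.
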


\begin{proof}
We will use the Fountain Theorem (see \cite[Theorem 3.6]{Wil}). It is well
known that $J_{\lambda }$ is an even functional and fulfills the (PS)
condition. Now, we will prove that if $k$ is large enough, then there are $%
\eta _{k}>\gamma _{k}>0$ such that%
\begin{equation*}
\left( A_{1}\right) \text{ \ \ }b_{k}:=\inf \left\{ J_{\lambda }\left(
u\right) \left\vert u\in Z_{k},\text{ }\left\Vert u\right\Vert _{\beta
(x)}=\gamma _{k}\right. \right\} \longrightarrow \infty \text{ as }%
k\longrightarrow \infty
\end{equation*}%
\begin{equation*}
\left( A_{2}\right) \text{ \ \ }a_{k}:=\max \left\{ J_{\lambda }\left(
u\right) \left\vert u\in Y_{k},\text{ }\left\Vert u\right\Vert _{\beta
(x)}=\eta _{k}\right. \right\} \leq 0.
\end{equation*}

$\left( A_{1}\right) $ For any $u\in Z_{k}$ such that $\left\Vert
u\right\Vert _{\beta (x)}=\gamma _{k}>1$, we have%
\begin{eqnarray*}
J_{\lambda }\left( u\right) &\geq &\frac{1}{p^{+}}I_{\beta (x)}(u)-\frac{%
\lambda }{q^{-}}\dint\limits_{\Omega }b(x)\left\vert u\right\vert ^{q\left(
x\right) }dx \\
&\geq &\frac{1}{p^{+}}I_{\beta (x)}(u)-\frac{\lambda }{q^{-}}\max \left\{
\left\Vert u\right\Vert _{q(.),b}^{q^{-}},\left\Vert u\right\Vert
_{q(.),b}^{q^{+}}\right\} \\
&\geq &\frac{1}{p^{+}}\left\Vert u\right\Vert _{\beta (x)}^{p^{-}}-\frac{%
\lambda }{q^{-}}\max \left\{ \left\Vert u\right\Vert _{q\left( .\right)
,b}^{q^{-}},\left\Vert u\right\Vert _{q\left( .\right) ,b}^{q^{+}}\right\} \\
&\geq &\left\{ 
\begin{array}{c}
\frac{1}{p^{+}}\left\Vert u\right\Vert _{\beta (x)}^{p^{-}}-\lambda ,\text{
\ \ }\left\Vert u\right\Vert _{q\left( .\right) ,b}\leq 1 \\ 
\frac{1}{p^{+}}\left\Vert u\right\Vert _{\beta (x)}^{p^{-}}-\lambda \alpha
_{k}^{q^{+}}\left\Vert u\right\Vert _{\beta (x)}^{q^{+}},\text{ \ \ }%
\left\Vert u\right\Vert _{q\left( .\right) ,b}>1%
\end{array}%
\right. \\
&\geq &\frac{1}{p^{+}}\left\Vert u\right\Vert _{\beta (x)}^{p^{-}}-\lambda
\alpha _{k}^{q^{+}}\left\Vert u\right\Vert _{\beta (x)}^{q^{+}} \\
&=&\frac{1}{p^{+}}\gamma _{k}^{p^{-}}-\lambda \alpha _{k}^{q^{+}}\gamma
_{k}^{q^{+}}
\end{eqnarray*}%
If we choose $\gamma _{k}=\left( \lambda q^{+}\alpha _{k}^{q^{+}}\right) ^{%
\frac{1}{p^{-}-q^{+}}}$, then we get 
\begin{eqnarray*}
J_{\lambda }\left( u\right) &\geq &\frac{1}{p^{+}}\left( \lambda q^{+}\alpha
_{k}^{q^{+}}\right) ^{\frac{p^{-}}{p^{-}-q^{+}}}-\lambda \alpha
_{k}^{q^{+}}\left( \lambda q^{+}\alpha _{k}^{q^{+}}\right) ^{\frac{q^{+}}{%
p^{-}-q^{+}}} \\
&=&\left( \frac{1}{p^{+}}-\frac{1}{q^{+}}\right) \left( \lambda q^{+}\alpha
_{k}^{q^{+}}\right) ^{\frac{p^{-}}{p^{-}-q^{+}}}\longrightarrow \infty
\end{eqnarray*}%
as $k\longrightarrow \infty $ because $p^{+}<q^{+}$ and $\alpha
_{k}\longrightarrow 0.$

$\left( A_{2}\right) $ Let $u\in Y_{k}$ be such that $\left\Vert
u\right\Vert _{\beta (x)}=\eta _{k}>\gamma _{k}>1$. Then, we have 
\begin{eqnarray*}
J_{\lambda }\left( u\right) &\leq &\frac{1}{p^{-}}\left\Vert u\right\Vert
_{\beta (x)}^{p^{+}}-\frac{\lambda }{q^{+}}\dint\limits_{\Omega }\left\vert
u\right\vert ^{q\left( x\right) }b\left( x\right) dx \\
&\leq &\frac{1}{p^{-}}\left\Vert u\right\Vert _{\beta (x)}^{p^{+}}-\frac{%
\lambda }{q^{+}}\min \left\{ \left\Vert u\right\Vert _{q\left( .\right)
,b}^{q^{-}},\left\Vert u\right\Vert _{q\left( .\right) ,b}^{q^{+}}\right\} .
\end{eqnarray*}%
Since the space $Y_{k}$ has finite dimension, then the norms $\left\Vert
.\right\Vert _{\beta (x)}$ and $\left\Vert .\right\Vert _{q\left( .\right)
,b}$ are equivalent. Finally, we obtain 
\begin{equation*}
J_{\lambda }\left( u\right) \rightarrow -\infty \text{ as }\left\Vert
u\right\Vert _{\beta (x)}\rightarrow +\infty \text{, }u\in Y_{k}
\end{equation*}%
due to $p^{+}<q^{-}$.
\end{proof}

\end{document}